\title{Continuous limits of large plant-pollinator random networks and some applications\thanks{This work is dedicated to S. M\'el\'eard. The authors thank Mathilde Dufa\"{y} and Pauline Lafitte-Godillon who participated to the early discussions on this project. They also thank Emmanuel Grenier for his help in the proof of Proposition 4.5. This work is funded by the European Union (ERC, SINGER, 101054787). Views and opinions expressed are however those of the author(s) only and do not necessarily reflect those of the European Union or the European Research Council. Neither the European Union nor the granting authority can be held responsible for them. This work has also been supported by the Chair ``Modélisation Mathématique et Biodiversité" of Veolia Environnement-Ecole Polytechnique-Museum National d’Histoire Naturelle-Fondation X. T.R. and V.C.T. acknowledge the support of the CDP C2EMPI, as well as the French State under the France-2030 programme, the University of Lille, the Initiative of Excellence of the University of Lille, the European Metropolis of Lille for their funding and support of the R-CDP-24-004-C2EMPI project. }}
\author{Sylvain Billiard\footnote{Univ. Lille, CNRS, UMR 8198 - Evo-Eco-Paleo, F-59000 Lille, France. sylvain.billiard@univ-lille.fr}, H\'el\`ene Leman\footnote{Inria, ENSL, UMPA, CNRS UMR 5669, 69364 Lyon, France. helene.leman@inria.fr}, Thomas Rey\footnote{Univ. C\^{o}te d'Azur, CNRS, LJAD, F-06108 Nice, France. thomas.rey@univ-lille.fr} and  Viet Chi Tran\footnote{Univ. Lille, CNRS, Inria, UMR 8524 - Laboratoire Paul Painlevé, F-59000 Lille, France. viet-chi.tran@inria.fr}}
\date{\today}
\numberwithin{equation}{section}
\newcommand{\Co}{\mathcal{C}}
\newcommand{\Lip}{\mbox{Lip}}
\newcommand{\eps}{\varepsilon}
\newcommand{\lbrac}{\left[\!\left[}
\newcommand{\rbrac}{\right]\!\right]}
\def\N{\mathbb{N}}
\def\P{\mathbb{P}}
\def\bP{\mathbf{P}}
\def\A{\mathbf{A}}
\def\R{\mathbb{R}}
\def\E{\mathbb{E}}
\def\ind{{\mathchoice {\rm 1\mskip-4mu l} {\rm 1\mskip-4mu l}
{\rm 1\mskip-4.5mu l} {\rm 1\mskip-5mu l}}}
\def\one{\mathds{1}}
\newcommand{\be} {\begin{equation}}
\newcommand{\ee} {\end{equation}}
\newcommand{\bea} {\begin{eqnarray}}
\newcommand{\eea} {\end{eqnarray}}
\newcommand{\Bea} {\begin{eqnarray*}}
\newcommand{\Eea} {\end{eqnarray*}}
\theoremstyle{plain}
\newtheorem{theorem}{Theorem}[section]
\newtheorem{proposition}[theorem]{Proposition}
\newtheorem{lemma}[theorem]{Lemma}
\newtheorem{corollary}[theorem]{Corollary}
\newtheorem{definition}[theorem]{Definition}
\newtheorem{hyp}[theorem]{Assumption}
\newtheorem{remark}[theorem]{Remark}
\newtheorem{example}[theorem]{Example}
\begin{document}

\maketitle
\begin{abstract}We study a stochastic individual-based model of interacting plant and pollinator species through a bipartite graph: each species is a node of the graph, an edge representing interactions between a pair of species. The dynamics of the system depends on the between- and within-species interactions: pollination by insects increases plant reproduction rate but has a cost which can increase plant death rate, depending on the densities of pollinators. Pollinators reproduction is increased by the resources harvested on plants. Each species is characterized by a trait corresponding to its degree of generalism. This trait determines the structure of the interaction graph and the quantities of resources exchanged between species. Our model includes in particular nested or modular networks. Deterministic approximations of the stochastic measure-valued process by systems of ordinary differential equations or integro-differential equations are established and studied, when the population is large or when the graph is dense and can be replaced with a graphon. The long-time behaviors of these limits are studied and central limit theorems are established to quantify the difference between the discrete stochastic individual-based model and the deterministic approximations. Finally, studying the continuous limits of the interaction network and the resulting PDEs, we show that nested plant-pollinator communities are expected to collapse towards a coexistence between a single pair of species of plants and pollinators.
\end{abstract}

\noindent \textbf{Keywords:} ecological mutualistic community, birth and death process, interacting particles, limit theorem, kinetic limit, graphon, integro-differential equation, stationary solution.\\
\noindent \textbf{AMS Codes:} 92D40, 92D25, 05C90, 60J80, 60F17, 47G20.\\

Communities of interacting species, such as plants and pollinators, are systems characterized by a large diversity of both their components (dozens of species are typically involved) and the topology of their interaction networks \cite{delmas2019}. How does the structure of the interaction networks affect the dynamics of species and the stability of communities is a common and long-standing question in the ecological literature e.g. \cite{may72, thebaultfontaine, lever2014sudden}. Addressing this question is challenging because the number of species in a community is large, and so is the number of possible edges. Describing ecological networks from observations thus requires much effort, without being ensured that the inferred networks are representative of the studied communities (interactions can indeed depend on space and time, but also on the abundances of the species which makes them harder or not to observe). From a theoretical point of view, the study of the dynamics and stability of communities most often relies either on the numerical analysis of large systems of ordinary differential equations (ODEs), or on the study of the system near equilibrium in simplified communities (see \textit{e.g.} \cite{may72,may73,barbierarnoldibuninloreau,bunin,cohennewman,fyodorovkhoruzhenko, Takeuchi,tangallesina,thebaultfontaine,lever2014sudden} and \cite{akjoujbarbierclenethachemmaidamassolnajimtran} for a review). This makes the identification of general properties of ecological communities difficult. Overall, despite being more than 50 years old, the question of correctly modelling ecological networks and communities is still challenging. In this paper, we develop a new theoretical framework that could help in addressing the previous questions, based on the fact that networks are hierarchically structured. Interactions can indeed be considered at different scales: between individuals, between species or at the level of the whole community \cite{guimaraes2020}. At all scales however, several studies showed evidence that the structure of the interaction networks is due to trait variations between or within species such as size, symmetry, or time of activity, see e.g. \cite{chamberlain2014,watts2016,villalobos2019,arroyo2021}. Starting from a stochastic individual-based model, our goal is to provide simplifications of an ecological network by deriving the continuous limits of both the population sizes and the interaction graph structured by a trait. 

 We focus on a particular ecological community: plant and pollinator species. The interactions between individuals of each species are modelled by a bipartite random network. Interactions affect the demographic rates of plant and pollinator individuals due to resource exchanges. On the one hand, plants benefit from pollinator visits to increase their reproduction rate. On the other hand, pollinators consume nectar, leaves, pollen, etc. which increases their reproduction or survival rates. However, producing such resources is costly for plants, which can negatively affect plant demographic rates, proportionally to pollinator densities \cite{holland2008,holland2010}.

The interaction rates between plant and pollinator individuals are modelled by a random graph where the species are the nodes of the graph. The existence of an edge between a plant and a pollinator species indicates that individuals of these two species can interact. The resulting graph is bipartite, since there is by definition no direct edges between insect species or between plant species. The topology of the graph depends on a trait that represents the degree of generalism of the species. Any pair of plant-pollinator species is connected independently from the other pairs with a probability that depends on the traits of the two considered species. This simple model generalizes Erd\"os-R\'enyi graphs that correspond to the case where the probability of connection is the same for every pairs. Such a model can also generate particular structures such as nested or modular graphs which are commonly found in ecological networks \cite{thebaultfontaine, guimaraes2020}.


In Section \ref{sec:stoch-model}, we present the stochastic individual-based model, how the random interaction graph is modelled, and how interactions affect demographic rates. 
Considering large populations but with a fixed number of species, we show in Section \ref{sec:LV} how the dynamics can be approximated by a system of ODEs (close to those commonly studied in the ecological literature). The fluctuations between the approximated deterministic limit and the stochastic individual-based process are established.\\
In Section \ref{sec:graphon}, we derive a continuous approximation of the random graph when the numbers of plant and pollinator species, say $n$ and $m$, are also large. When the graph is dense (\textit{i.e.} the order of the number of edges is in $O(n\times m)$), the complex random network can be replaced by a continuous object, namely a graphon \cite{borgschayeslovaszsosvesztergombi,lovaszbook}. In this case, the high-dimensional system of ODEs can be replaced by two partial integro-differential equations: one for the plants and the other for the pollinators. The latter equations fall into the broader class of kinetic equations, the most well known and studied being the seminal Boltzmann and Vlasov equations. The use of such mesoscopic scale, namely intermediate between a microscopic agent-based approach  \cite{fourniermeleard}, and an averaged macroscopic one (over space) such as \cite{pouchol2018global}, is not a novelty for modelling  competitive interactions between species \cite{desvillettes2008selection,jabin2011selection,bellomo2015interplay,berardo2020interactions}, 
\textit{i.e.} with a large but not infinite number of species with trait structure. Indeed, such approach can be traced back as early as the book of \cite{roughgarden1978} for the mathematical modeling of evolutionary ecology. 
However, the kinetic treatment of large random networks and their approximations by integro-differential equations is new to our knowledge. Finally, in Section \ref{sec:applications}, the large-time behaviors of the ODE system or of the integro-differential equations are studied and simulations are produced to explore several situations.

\section{Stochastic individual-based model with species interactions}\label{sec:stoch-model}

In this section, we introduce a stochastic individual-based model of plant and pollinator species. In this model, the number of species and the number of individuals in each species are finite. Each species is defined by a trait, for instance a morphological or functional trait, which determines its degree of generalism. This trait shapes the interaction network: generalist plant species can be visited by a large number of insect species and generalist pollinator species can visit a large number of plant species. Even though several coevolved traits can be considered in the model (\textit{e.g.} orchids have flowers with a particular morphology, color, phenology, etc.), we will focus on a single trait for the sake of simplicity. The dynamics is driven by the births and deaths of individuals at random times that depend 1) on their positions in the plant-pollinator network, determined by their species traits, 2) by the weights of the interactions between pairs of species, and 3) by the population sizes of the species. The evolution of these interacting populations is described by a stochastic differential equation (SDE) involving Poisson point measures and an acceptance-rejection algorithm as in the Gillespie algorithm \cite{gillespie76} (see \cite{fourniermeleard,ferriere2009stochastic,metztran} for the mathematical formulation of the Gillespie algorithm by mean of SDEs in cases with interactions). 


\subsection{Description of the plant-pollinator community}

We consider $n$ and $m$ plant and pollinator species, respectively. Each plant species is characterized by a trait $x\in [0,1]$ and each pollinator species by a trait $y\in[0,1]$. For $i\in \{1,\dots,n\}=\lbrac 1, n\rbrac$ and $j\in \{1,\dots,m\}=\lbrac 1, m\rbrac$, we respectively denote by $x^i$ and $y^j$ the traits of the plant species $i$ and of the pollinator species $j$.  These traits can represent for instance their degrees of generalism, \textit{i.e.} their tendencies to interact with a large number of other species: a  species is considered as generalist when  its trait ($x^i$ or $y^j$) is close to $1$, or specialist when it is close to $0$.

\subsection{The plant-pollinator interaction network as a bipartite random graph}

Plants and pollinators interact through a bipartite network where each species is a vertex, and an interaction is an edge. We denote $i \sim j$ or $j\sim i$ when individuals of the plant species $i$ can interact with individuals of the pollinator species $j$. There are no edges between two plant or two pollinator species because the network only represents interactions between plants and pollinators. Yet, when specifying the birth and death rates, competition kernels within and between plant species, and within and between pollinator species, will be introduced in the model.

The bipartite graph can be represented by an $n\times m$ adjacency matrix $G^{n,m}$, with 
$$
G^{n,m}_{ij}=\left\{
\begin{aligned}
1 & \text{ if the plant species $i$ can interact with the pollinator species $j$},\\
0 & \text{ otherwise}.
\end{aligned}
\right.
$$
Regarding the structure of the network, we consider a stochastic bipartite graph that generalizes the Erd\"os-R\'enyi random graph (\cite{bollobas2001,durrett,vanderhofstad}). The probability $\phi(x^i,y^j)$ that there is an edge $i \sim j$ between the plant species $i$ and the pollinator species $j$ is assumed to depend only on the traits $x^i$ and $y^j$ of these two species. Different pairs are supposed to be connected independently, the interaction graph is thus built such that:
\begin{hyp}\label{hyp:Gij}
We assume that there exists a continuous function $\phi: [0,1]^2 \to [0,1]$, such that {$\phi$ is Lipschitz continuous with respect to (w.r.t.) both variables and such that} conditionally on the traits $(x^{i},y^{j})_{i,j\in \lbrac 1,n\rbrac \times \lbrac 1,m\rbrac}$, the random variables $(G^{n,m}_{ij})_{(i,j)\in \lbrac 1,n\rbrac\times \lbrac 1,m\rbrac}$ are independent and distributed as Bernoulli random variables with parameters $(\phi(x^i,y^j))_{(i,j)\in \lbrac 1,n\rbrac\times \lbrac 1,m\rbrac}$.
\end{hyp}


\begin{figure}[!ht]
\begin{center}
\begin{tabular}{cc}
\hspace{-0.8cm}\includegraphics[height=6cm,angle=0]{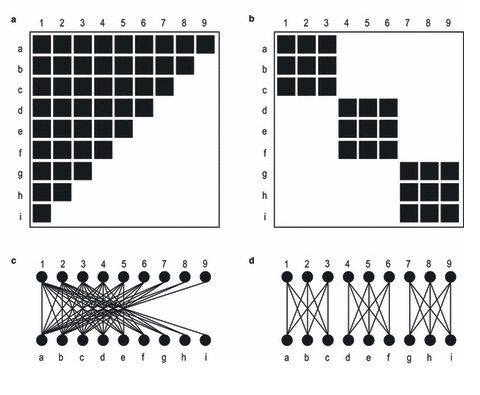} & \includegraphics[height=5.2cm,angle=0,trim=0cm -1cm 0cm 1cm]{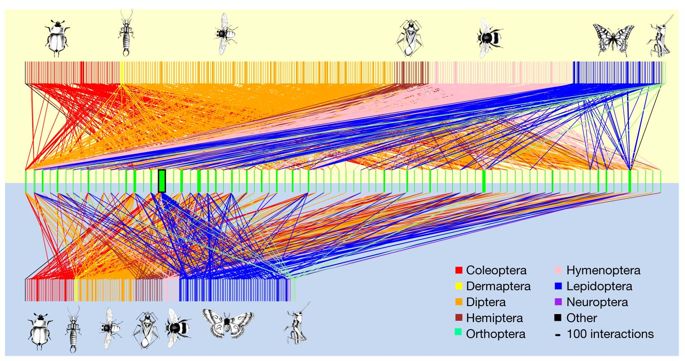} \\
(a) & (b)
\end{tabular}
\caption{{\small \textit{(a): Nested (left) or modular (right) bipartite networks, from Fontaine et al. \cite{fontaineguimareskefithebault}. (b) Pollination network for diurnal and nocturnal insect species, from Knop et al. \cite{knopzollerrysergerpehorlerfontaine}}}}\label{fig:colin}
\end{center}
\end{figure}

\begin{example}\label{exemples:cases}

\noindent \textbf{Case 1.} If there exists $\phi_0\in (0,1)$ such that for all $i$ and $j$, $\phi(x^i,y^j)=\phi_0$, then the stochastic network is a bipartite Erd\"{o}s-R\'{e}nyi graph. 
The degree distributions of nodes representing pollinators (resp. plants), \textit{i.e.} the numbers of edges, do not depend on the species traits. Degrees are thus independent binomial r.v. with parameters $(m,\phi_0)$ (resp. $(n,\phi_0)$) and the total number of edges follows a binomial distribution with parameters $(nm,\phi_0)$.\\
\smallskip
\noindent \textbf{Case 2.} The probability of an edge $i \sim j$ between two species $i$ and $j$ depends on their degree of generalism, for instance by assuming:
\begin{equation}
\phi(x^i,y^j)=x^i y^j.
\end{equation}
Under this assumption, two generalist species ($x^i$ and $y^j$ close to $1$) have a higher probability to be connected than two specialist species ($x^i$ and $y^j$ close to $0$). Nested graphs result from such an assumption (see Fig. \ref{fig:colin} a-left for an illustration of nested graphs definition).\\
\smallskip
\noindent \textbf{Case 3.} When plant and pollinator species can be structured in groups, for instance because of pollination syndrome or spatio-temporal segregation, the probability of an edge $i \sim j$  $\phi(x^i,y^j)=\phi_{IJ}$ depends only on classes $I\ni i$ and $J\ni j$. The resulting random graph is a Stochastic Block Model (SBM \cite{hollandlaskeyleinhardt}, see \textit{e.g.} \cite{abbe} for a review), often called modular networks in community ecology (see Fig. \ref{fig:colin} a-right for an illustration of modular graphs definition). In this case, the function $\phi$ is not Lipschitz continuous, which is a technical assumption needed in Section \ref{sec:graphon} but it is possible to smooth it in this case.
\end{example}

If species $i$ and $j$ interact, \textit{i.e.} if $G^{n,m}_{ij}=1$, we denote by $c_{ij}^{n,m}$ the weight of the interaction $i\sim j$. The quantity $c_{ij}^{n,m}$ describes the intensity and frequency of the relation between the pollinator species $j$ and the plant species $i$. 
From the point of view of the plant, $c_{ij}^{n,m}$ can be interpreted as a measure of the pollination services received from the pollinators. From the point of view of the pollinators, $c_{ij}^{n,m}$ measures the quantity and quality of nutrients collected from the plants. 

\begin{hyp}\label{hyp:cij}
For all $n,m\in \N^2$, conditionally on $(x^{i},y^{j})_{(i,j)\in \lbrac 1,n\rbrac\times \lbrac 1,m\rbrac}$, $(c_{ij}^{n,m})_{(i,j)\in \lbrac 1,n\rbrac\times \lbrac 1,m\rbrac}$ is assumed to be a sequence of independent non-negative r. v. such that 
\begin{itemize}
    \item the expected values only depend on the traits of the plants and pollinators through a function $c^{n,m}\ : [0,1]^2\rightarrow \R$: for all $i\in \lbrac 1, n\rbrac,j\in \lbrac 1,m\rbrac$:
\begin{equation}
\E_{x,y}\left[c^{n,m}_{ij}\right]=c^{n,m}\big(x^i,y^j\big),\label{def:Cijnm}\end{equation}
with $\E_{x,y}[A]=\E[A|(x^{i},y^{j})_{(i,j)\in \lbrac 1,n\rbrac\times \lbrac 1,m\rbrac}]$ for any bounded random variable $A$;
\item there exists $K_c>0$ such that almost surely, 
\[\sup_{n\in\N,m\in\N} \sup_{(i,j)\in \lbrac 1,n\rbrac\times \lbrac 1,m\rbrac}(m+n)c^{n,m}_{i,j}<K_c,\]
in particular, this implies that 
\begin{equation}\label{var:Cijnm}
    V_{max}:=\sup_{n\in\N,m\in\N} \sup_{(i,j)\in \lbrac 1,n\rbrac\times \lbrac 1,m\rbrac}\text{Var}_{x,y}((m+n)c^{n,m}_{i,j})<\infty,
    \end{equation}
where $\text{Var}_{x,y}[A]=\text{Var}[A|(x^{i},y^{j})_{(i,j)\in \lbrac 1,n\rbrac\times \lbrac 1,m\rbrac}]$.
\end{itemize}
\end{hyp}

\begin{example} \label{ex_eco} Depending on plants and pollinators' traits considered, different functions $c^{n,m}$ be considered:
\begin{itemize}
    \item The intensity and frequency of the interaction can be independent of the species traits, i.e. $c^{n,m}_{i,j}=\frac{c}{n+m}$, which would mean that given a plant and a pollinator species interact, the effect of the interaction on their demographic rates is the same for all species;
    \item $x^i$ and $y^j$ can represent the time of peak activity during a day or a season for plants $i$ and pollinators $j$, and $c^{n,m}_{i,j}$ a measure of the overlap of activities for both species; Similarly,  $c^{n,m}_{i,j}$ can represent geographical proximity rather than time overlap between both species;
    \item $x^i$ and $y^j$ can virtually represent some aspects of the morphology of plants' flowers and pollinators' bodies and the r.v. $c^{n,m}_{i,j}$ some morphological preferences. For instance, assuming $(x^i,y^j)\in [0,1]^2$, $c^{n,m}_{i,j}=\frac{x^i y^j}{n+m}$ would mean that pollinators and plants tend to interact more when the traits are both large, while assuming $c^{n,m}_{i,j}=\frac{x^i (1-y^j)}{n+m}$ would mean that they will interact more when the plant has a large trait value and the pollinator a low trait value.
\end{itemize}
\end{example}

\subsection{Stochastic dynamics of the plant-pollinator community}

The dynamics of plant and pollinator populations within the community is ruled by random point birth and death events. We assume that the size of the population of plants and pollinators is scaled by a factor $K>0$, called the carrying capacity in Ecology. This carrying capacity $K$ is a measure of the size of the system, in other words, it controls the total abundance of the whole community that can be sustained by the environment. A continuous limit of the stochastic dynamics will be obtained when the species abundances tend to infinity, in other words when $K \rightarrow \infty$. 

Given the scaling factor $K$, we denote $P^{K,i}_t$ and $A^{K,j}_t$ the sizes of the plant and pollinator species $i$ and $j$ at time $t$. The plant and pollinator populations at time $t$ can be represented by the following point measures:
\begin{equation}
\bP^{K,n,m}_t(dx)=\frac{1}{nK}\sum_{i=1}^n P^{K,i}_t \delta_{x^i}(dx),\qquad \A^{K,n,m}_t(dy)=\frac{1}{mK}\sum_{j=1}^m A^{K,j}_t \delta_{y^j}(dy).\label{def:mesuresAP}
\end{equation}

%

The interactions between plants and pollinators are characterized by exchanges of resources \cite{holland2010}. The quantities of resources gained by plants or pollinators $R$ are modeled through a "mass-action model".
At time $t$, in the population scaled by $K$, a single individual of the plant species $i$ interacting with pollinator species $j$ is supposed to gain a quantity of resources (here the pollination service) proportional to the abundance of pollinators $A^{K,j}_t/K$ weighted by the interaction efficiency $c_{ij}^{n,m}$,
such that the total resource gained by a plant individual of species $i$ through the pollination interactions is
\begin{equation}\label{Rij:model2}
R^{A,K,i}_t:=\sum_{j\sim i}\frac{c^{n,m}_{ij}  A^{K,j}_t}{K}.
\end{equation}

Similarly, for a given pollinator of the species $j$, the resources gained from the interactions with the plant species 
at time $t$ in the population parameterized by $K$ is assumed to be:
\begin{equation}\label{Rij:model3}
R^{P,K,j}_t=\sum_{i\sim j}\frac{c^{n,m}_{ij}  P^{K,i}_t}{K}.
\end{equation}

The dynamics of the community is supposed to be governed by the birth and death rates of the plant and pollinator populations. We denote $b^P(R)$ and $b^A(R)$ the individual birth rate of plant and pollinator species, respectively, each depending on the quantities of resources exchanged $R^P$ or $R^A$. Similarly, we denote $d^P(R)$ and $d^A(R)$ the individual death rates. Finally,  
\begin{equation}g^P(R):=b^P(R)-d^P(R) \quad \text{and} \quad g^A(R):=b^A(R)-d^A(R),\label{def:growthrates}
\end{equation}
are the component of the growth rate due to the interactions between plants and pollinators.  

The plants and pollinators dynamics are also assumed to be affected by logistic competition among plants and among pollinators (within and between species  competition). 
We suppose that competition strength depends on the traits $x$ and $y$. This can represent the fact that plants or pollinators with similar traits tend to share similar ecological niche, or phenology, etc. A plant with trait $x\in [0,1]$ suffers an additional death rate term due to competition such that
\begin{equation}\label{logistic:plant}
k\star \mathbf{P}^{K,n,m}_t(x):=\int_{[0,1]} k(x,x') d\mathbf{P}^{K,n,m}_t(dx')=\frac{1}{nK}\sum_{i=1}^n k(x,x^i) P^{K,i}_t,
\end{equation} 
where $k(x,x')$ quantifies the competition pressure exerted by another plant of trait $x'$. Similarly, a pollinator with trait $y' \in [0,1]$ suffers an additional death rate due to competition with pollinators of trait $y$ given by
\begin{equation}\label{logistic:pollinator}
h\star \mathbf{A}^{K,n,m}_t(y):=\int_{[0,1]} h(y,y') d\mathbf{A}^{K,n,m}_t(dy')=\frac{1}{mK}\sum_{j=1}^m h(y,y^j) A^{K,j}_t,
\end{equation}
where $h(y,y')$ quantifies the competition of individuals with trait $y'$ on individuals with trait $y$.

The following assumptions on the functions involved in the model will be needed, both for modelling and mathematical purposes.
\begin{hyp}\label{hyp_functions}
(i) The birth rates $b^P$ and $b^A$ are assumed to be bounded on $\R^+$ by constants $M^P>0$ and $M^A>0$ respectively. Moreover, all the rate functions $b^P$, $b^A$, $d^P$ and $d^A$ are assumed to be locally Lipschitz continuous on $\R^+$. \\
(ii) The competition kernels $k$ and $h$ are assumed to be Lipschitz continuous w.r.t. both variables on $[0,1]^2$. 
\end{hyp}

\subsection{Stochastic differential equations}

Following works by M\'el\'eard and co-authors \cite{fourniermeleard,champagnatferrieremeleard}, it is possible to describe the evolution of the population measures $(\bP^{K,n,m}_t,\A^{K,n,m}_t)_{t\in \R_+}$ defined in \eqref{def:mesuresAP} by SDEs driven by Poisson point measures. Let us first present these SDEs and then explain their heuristic meaning.

\begin{definition}\label{def_processes}
Suppose that assumptions \ref{hyp:Gij}, \ref{hyp:cij} and \ref{hyp_functions} hold. 
Let $Q_B^P(ds,dk,d\theta)$, $Q_D^P(ds,dk,d\theta)$, (resp. $Q_B^A(ds,dk,d\theta)$ and $Q^A_D(ds,dk,d\theta)$) be Poisson point measures on $\R_+\times E:=\R_+\times \lbrac 1,n\rbrac \times \R_+$ (resp. $\R_+\times F:= \R_+\times \lbrac 1,m \rbrac \times \R_+$) with intensity measure $q(ds,dk,d\theta)=ds\, \mathfrak{n}(dk)\, d\theta$ where $ds$ and $d\theta$ are Lebesgue measures on $\R_+$ and where $\mathfrak{n}(dk)$ is the counting measure on $\N^*=\{1,2,\dots\}$. {Consider initial conditions $P^{K,i}_0$'s and $A^{K,j}_0$'s that are independent from the Poisson point measures and from the $G_{ij}^{n,m}$'s and $c_{ij}^{n,m}$'s.} 

\noindent For $i\in \lbrac 1,n\rbrac$ and $j\in \lbrac 1,m\rbrac$, one has
\begin{align}
P^{K,i}_t=  & P^{K,i}_0+\int_0^t \int_{E}  \ind_{i=k} \ind_{\theta\leq  b^P\big(R^{A,K,i}_{s-}\big)P^{K,i}_{s-}} Q^P_B(ds,dk,d\theta)\nonumber\\
 & \hspace{1cm} -\int_0^t \int_{E}  \ind_{i=k} \ind_{\theta\leq \big[d^P\big(R^{A,K,i}_{s-}\big)+k\star\mathbf{P}^{K,n,m}_{s-}(x^i) \big]P^{K,i}_{s-}} Q^P_D(ds,dk,d\theta)\label{def_Poisson1}\\
A^{K,j}_t= & A^{K,j}_0+ \int_0^t \int_{F}  \ind_{j=k} \ind_{\theta\leq b^A\big(R^{P,K,j}_{s-}\big)A^{K,j}_{s-}} Q^A_B(ds,dk,d\theta)\nonumber\\
& \hspace{1cm}-\int_0^t \int_{F}  \ind_{j=k} \ind_{\theta\leq \big[d^A\big(R^{P,K,j}_{s-}\big)+h\star \mathbf{A}^{K,n,m}_{s-}(y^j)\big] A^{K,j}_{s-}} Q^A_D(ds,dk,d\theta).\label{def_Poisson2}
\end{align}
\end{definition}

The above SDEs correspond to the mathematical formulation of the individual-based simulations classically used in ecology and originated with Gillespie's algorithm \cite{gillespie76,gillespie77}. The Poisson point measures $Q_B^P$ and $Q_B^A$ (resp. $Q_D^P$ and $Q_D^A$) give the random times of possible birth (resp. death) events. The indicators $\ind_{i=k}$ or $\ind_{j=k}$ in the integral ensure that the equations for $P^{K,i}$ or $A^{K,j}$ are indeed modified when the birth or death events affect the plant population $i$ or pollinator population $j$. The indicators in $\theta$ correspond to an acceptance-rejection algorithm so that the birth and death times occur with the correct rates. Note that in these SDEs, the network is hidden in the definitions of the terms $R^{A,K,i}_s$ and $R^{P,K,j}_s$ (see \eqref{Rij:model2} and \eqref{Rij:model3}).\\

In order to approximate the individual-based model with ODEs or integro-differential equations, we can reformulate the SDEs \eqref{def_Poisson1} and \eqref{def_Poisson2} by expressing the processes $P^{K,i}$ and $A^{K,j}$ as semi-martingales. The following proposition also states the existence and uniqueness of a solution to \eqref{def_Poisson1}-\eqref{def_Poisson2}. For a finite measure $\mu$ and a positive measurable function $f$, we will denote by $\langle \mu,f\rangle=\int f \, d\mu$ the integral of $f$ w.r.t. $\mu$.

\begin{proposition}\label{prop:stochpro}
Consider here $n$, $m$ and $K$ fixed. If Assumption \ref{hyp_functions} holds and if
\[\E\left[\langle \mathbf{P}^{K,n,m}_0,\one\rangle^{2}+\langle \A^{K,n,m}_0,\one\rangle^{2}\right]<+\infty,\]
then the processes defined by \eqref{def_Poisson1} and \eqref{def_Poisson2} are well defined on $\R_+$. Moreover, for all $T\geq 0$,
\begin{equation}\label{propagation:moment}
\E\left[\sup_{t\in [0,T]}\langle \mathbf{P}^{K,n,m}_t,\one\rangle^{2}+\langle \A^{K,n,m}_t,\one\rangle^{2} \right]<+\infty. 
\end{equation}
For any $f: [0,1]\rightarrow \R$ measurable test function, we have:
\begin{align}
\langle  \mathbf{P}^{K,n,m}_t,f\rangle= & \frac{1}{n}\sum_{i=1}^n \frac{1}{K}P^{K,i}_t f(x^i) \nonumber\\
= & \langle  \mathbf{P}^{K,n,m}_0,f\rangle + \int_0^t \frac{1}{nK}\sum_{i=1}^n f(x^i) \Big[ g^P\big(R^{A,K,i}_{s_-}\big)-k\star \mathbf{P}^{K,n,m}_s(x^i)\Big]  P^{K,i}_s\ ds \nonumber\\
& +  \frac{1}{nK}\sum_{i=1}^n f(x^i) M^{K,i}_t,\label{eq:semimartP}
\end{align}
where $(M^{K,i})_{i\in \{1,..,n\}}$ are square integrable martingales with predictable quadratic variation processes:
\begin{equation}\label{eq:martingaleP}
\langle M^{K,i}\rangle_t= \int_0^t \left(b^P\big(R^{A,K,i}_{s}\big)+ d^P\big(R^{A,K,i}_{s}\big)+k\star \mathbf{P}^{K,n,m}_s(x^i)\right) P^{K,i}_s \ ds.\end{equation}
A similar expression holds for the pollinator populations. For any $f: [0,1]\rightarrow \R$ measurable test function, we have:
\begin{align}
\langle  \mathbf{A}^{K,n,m}_t,f\rangle= & \frac{1}{m}\sum_{j=1}^m \frac{1}{K}A^{K,j}_t f(y^j) \nonumber\\
= & \langle  \mathbf{A}^{K,n,m}_0,f\rangle + \int_0^t \frac{1}{mK}\sum_{j=1}^m f(y^j) \Big[g^A\big(R^{P,K,j}_{s_-}\big) - h\star \mathbf{A}^{K,n,m}_s(y^j)\Big] A^{K,j}_s\ ds\nonumber\\
 &  + \frac{1}{mK}\sum_{j=1}^m f(y^j) M^{K,j}_t,\label{eq:semimartA}
\end{align}
where $(M^{K,j})_{j\in \{1,..,m\}}$ are square integrable martingales with predictable quadratic variation processes:
\[\langle M^{K,j}\rangle_t= \int_0^t \left(b^A\big(R^{P,K,j}_{s}\big)+ d^A\big(R^{P,K,j}_{s}\big)+ h\star \mathbf{A}^{K,n,m}_s(y^j)\right) A^{K,j}_s \ ds.\]
\end{proposition}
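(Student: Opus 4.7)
The proof follows the three-step template of Fournier and M\'el\'eard \cite{fourniermeleard}: construct a unique strong solution by localization, propagate second moments via It\^o's formula for pure-jump semimartingales, and obtain the semimartingale decomposition by compensation of the driving Poisson integrals.

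\textbf{Existence, uniqueness and non-explosion.} I would introduce the stopping times $\tau_N = \inf\{t \geq 0 : \langle \bP^{K,n,m}_t, \one\rangle + \langle \A^{K,n,m}_t,\one\rangle \geq N\}$ and build the solution piecewise on $[0, \tau_N]$. Since $n$, $m$ and $K$ are fixed, the rates $b^P, b^A, d^P, d^A$ are locally Lipschitz on $[0,\infty)$ by Assumption \ref{hyp_functions}(i) and the competition kernels $k, h$ are continuous on the compact $[0,1]^2$, so all jump intensities in \eqref{def_Poisson1}--\eqref{def_Poisson2} are uniformly bounded on $[0, \tau_N]$. Existence and pathwise uniqueness of a strong solution up to $\tau_N$ then follow from the standard theory of SDEs driven by Poisson point measures with bounded intensities. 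To show that $\tau_N \uparrow \infty$ almost surely, I would use that $b^P \leq M^P$ and $b^A \leq M^A$ and that the death and competition contributions only decrease the populations; hence each $P^{K,i}$ and each $A^{K,j}$ is stochastically dominated by an independent linear pure birth (Yule) process with rate $M^P$ or $M^A$ respectively. Such processes do not explode in finite time, so $\tau_N \to \infty$ a.s.

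\textbf{Moment propagation.} For \eqref{propagation:moment}, I would apply It\^o's formula for pure-jump semimartingales to $\Phi_t := (\langle \bP^{K,n,m}_t, \one\rangle + \langle \A^{K,n,m}_t, \one\rangle)^2$. Individual jumps have size $1/(nK)$ or $1/(mK)$, so after expanding the square the compensator of $\Phi$ is controlled, using $b^P \leq M^P$, $b^A \leq M^A$ and the non-positivity of the death and competition contributions, by a constant plus a constant multiple of $\Phi_s$. Doob's maximal inequality applied to the compensated martingale part, followed by Gronwall's lemma, yields the bound for $\E[\sup_{s \leq t\wedge \tau_N}\Phi_s]$ uniformly in $N$. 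Passing to the limit $N \to \infty$ by Fatou gives \eqref{propagation:moment}.

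\textbf{Semimartingale decomposition.} Granted the $L^2$ bound of the previous step, each Poisson integral in \eqref{def_Poisson1}--\eqref{def_Poisson2} is square-integrable and splits as its predictable compensator plus a compensated martingale. Under the intensity $ds\,\mathfrak{n}(dk)\,d\theta$, the compensator of the birth integral for species $i$ is $\int_0^t b^P(R^{A,K,i}_s) P^{K,i}_s\, ds$, and analogously for the death integral and for the two pollinator Poisson integrals. Summing the four decompositions for fixed $i$ with the weights $f(x^i)/(nK)$, using $g^P = b^P - d^P$ and the definition \eqref{logistic:plant} of the competition term, gives \eqref{eq:semimartP}. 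The martingale $M^{K,i}$ is a compensated Poisson integral; since birth and death measures are independent and their jumps have absolute value $1$, its predictable quadratic variation equals the sum of the birth and death compensators evaluated against $P^{K,i}_s\,ds$, which yields \eqref{eq:martingaleP}. The pollinator statement follows by the same argument with the roles of $(n,x^i,k)$ and $(m,y^j,h)$ interchanged.

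The only genuinely delicate point is the non-explosion in the first step, which relies essentially on Assumption \ref{hyp_functions}(i): without the global bound on the birth rates, the stochastic domination by a linear pure birth process would fail and the process could a priori explode in finite time.
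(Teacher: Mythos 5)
Your proposal is correct and follows essentially the same route as the paper's proof in Appendix A.1: stochastic domination of each population by a pure birth process at the maximal rate $M^P$ or $M^A$, localization by stopping times, It\^o's formula for jump processes plus Gronwall to propagate the second moment, existence and uniqueness from the standard theory of SDEs driven by Poisson point measures (the paper cites Ikeda--Watanabe), and compensation of the Poisson integrals for the semimartingale decomposition and the bracket. The only cosmetic difference is that the paper applies It\^o to the square of the dominating birth-only process, whose decomposition is increasing so the supremum can be inserted without Doob's inequality, whereas you work with the actual process and invoke Doob on the martingale part; both are standard and valid.
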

The proof follows from usual stochastic calculus with Poisson point processes (\textit{e.g.} \cite{ikedawatanabe}), as developed in \cite{bansayemeleard,fourniermeleard} for example. We however give a sketch of proof in Appendix~\ref{app_proof1}.

Thus the processes $(\bP^{K,n,m}_t)_{t\in \R_+}$ and $(\A^{K,n,m}_t)_{t\in \R_+}$ are well-defined in the set $\mathbb{D}(\R_+,\mathcal{M}_F([0,1]))$ of right-continuous left-limited (càdlàg) processes with values in the set of finite measures on $[0,1]$. The space $\mathcal{M}_F([0,1])$ embedded with the weak topology is a Polish space and the set of càdlàg functions is embedded with the Skorokhod topology which makes it Polish as well (\textit{e.g.} \cite[Th. 6.8 and 12.2]{billingsley99}).

\section{Community dynamics limit when abundances are large but the number of species is fixed}\label{sec:LV}

In this section, we consider the numbers of species $n$ and $m$ as fixed, while parameter $K$ rescaling the population sizes of plants and pollinators tends to $+\infty$. In this section, since $n$ and $m$ are fixed, these letters will be omitted to avoid cumbersomeness when there is no possible confusion.

\subsection{Law of large numbers}

\begin{proposition}\label{prop_lln}
We consider a sequence $(\bP^{K,n,m},\A^{K,n,m})_{K\in \N}$ of processes as in Definition~\ref{def_processes}, with initial conditions such that 
\begin{equation}\sup_{K\in \N} \E\left[\langle \mathbf{P}^{K,n,m}_0,\one\rangle^{3}+\langle \A^{K,n,m}_0,\one\rangle^{3}\right]<+\infty,\label{hyp:moments3}\end{equation}
and such that there exist $(\widetilde{P}^1_0,\dots, \widetilde{P}^n_0)$ and $(\widetilde{A}_0^1,\dots \widetilde{A}_0^m)$, independent from the $G_{ij}^{n,m}$'s and $c_{ij}^{n,m}$'s, satisfying the following convergences almost surely (a.s.) :
$$\lim_{K\rightarrow +\infty} \frac{P^{K,i}_0}{K}=\widetilde{P}^i_0,\qquad \lim_{K\rightarrow +\infty} \frac{A^{K,j}_0}{K}=\widetilde{A}^j_0.$$
Then, for all $T\geq 0$, the following convergence holds a.s.:
\begin{equation}
\lim_{K\to\infty} \sup_{t\leq T} \sup_{i,j} \left\{ \left|\frac{P^{K,i}_t}{K} - \widetilde{P}^i_t  \right|,
\left|\frac{A^{K,j}_t}{K} - \widetilde A^j_t  \right| \right\}=0,
\end{equation}
where $(\widetilde{P}^1_t,\dots, \widetilde{P}^n_t)_{t\geq 0}$ and $(\widetilde{A}_t^1,\dots \widetilde{A}_t^m)_{t\geq 0}$ are the unique solution of the system, which exists on $\R^{n+m}$:
\begin{equation}
\label{edo}
    \begin{aligned}
\forall i\in \lbrac 1, n\rbrac,\quad & \frac{d\widetilde{P}^i_t}{dt}=  \left(g^P\big(  \widetilde R^{A,i}_t\big)   - \frac{1}{n}\sum_{\ell=1}^n k(x^i,x^\ell){\widetilde P}^\ell_t\right)  \widetilde{P}^i_t \\
j\in \lbrac 1, m\rbrac,\quad & \frac{d\widetilde{A}^j_t}{dt}=  \left(g^A\big( \widetilde R^{P,j}_t\big)  -\frac{1}{m} \sum_{\ell=1}^m h(y^j,y^\ell) \widetilde{A}^\ell_t \ \right) \widetilde A^j_t,
    \end{aligned}
\end{equation}
where, for all $t\in\R^+$, $\widetilde{R}^{P,j}_t=\sum_{i\sim j} c_{ij}^{n,m}\widetilde{P}^i_t $ and $\widetilde{R}^{A,i}_t=\sum_{j\sim i} c_{ij}^{n,m}\widetilde{A}^j_t$.
\end{proposition}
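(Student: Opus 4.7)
The approach is the classical semi-martingale decomposition plus Gronwall scheme of~\cite{fourniermeleard}, carried out pathwise given the realization of the network $G^{n,m}$ and of the weights $(c^{n,m}_{ij})$. First I would establish global existence and uniqueness of the solution of~\eqref{edo} on $\R_+$: the right-hand side is locally Lipschitz by Assumption~\ref{hyp_functions}, so Cauchy-Lipschitz yields a maximal solution, and since $b^P$ and $b^A$ are bounded by $M^P$ and $M^A$ whereas the competition and death contributions are non-negative in the death direction, each coordinate satisfies $\widetilde{P}^i_t \leq \widetilde{P}^i_0 e^{M^P t}$ and similarly for $\widetilde{A}^j_t$, which rules out blow-up and extends the solution to $\R_+$.

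The core of the proof is a uniform-in-$K$ moment estimate combined with Gronwall's lemma applied to the differences $\xi^{K,i}_t := P^{K,i}_t/K - \widetilde{P}^i_t$ and $\eta^{K,j}_t := A^{K,j}_t/K - \widetilde{A}^j_t$. Applying~\eqref{eq:semimartP}-\eqref{eq:semimartA} species by species with constant test functions, using the boundedness of $b^P$ and $b^A$, Doob's inequality on the martingale parts, Gronwall, and the initial third-moment bound~\eqref{hyp:moments3}, I would first obtain
$$\sup_{K}\,\E\Bigl[\sup_{t\leq T}\bigl(\langle \bP^{K,n,m}_t,\one\rangle^{3}+\langle \A^{K,n,m}_t,\one\rangle^{3}\bigr)\Bigr]<+\infty.$$
Subtracting~\eqref{edo} from the semi-martingale identity and using the local Lipschitz continuity of $g^P$, $g^A$, $k$, $h$ on the bounded set provided by this estimate, the drift difference at time $s$ is dominated by $C_T \max_{i',j'}(|\xi^{K,i'}_s|+|\eta^{K,j'}_s|)$ on the event that all populations remain bounded (handled by a standard stopping-time localization). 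The martingale remainder is controlled by Doob's inequality together with~\eqref{eq:martingaleP}, giving $\E[\sup_{t\leq T}(M^{K,i}_t/K)^{2}]=O(1/K)$, and a Burkholder-Davis-Gundy estimate together with the third-moment control promotes this to $\E[\sup_{t\leq T}(M^{K,i}_t/K)^{4}]=O(1/K^{2})$.

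A pathwise Gronwall estimate on the localized event then bounds $\sup_{t\leq T, i, j}(|\xi^{K,i}_t|+|\eta^{K,j}_t|)$ by $e^{C_T}$ times the sum of the initial discrepancies and the martingale suprema, both of which tend to zero almost surely as $K\to\infty$: the initial part by hypothesis, and the martingale part by Markov's inequality and the Borel-Cantelli lemma applied with the $L^4$ estimate, since $\sum_K 1/K^{2}<+\infty$. The main technical obstacle is precisely this upgrade from convergence in probability, which would already follow from the $L^2$ martingale bound, to almost sure convergence; this is what forces the fourth-moment estimate on the martingale and therefore the third-moment hypothesis~\eqref{hyp:moments3}. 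A secondary subtlety is the pathwise Lipschitz control of the drift, whose constant depends on an upper bound for the populations; this is handled by the stopping-time localization, which is removed at the end thanks to the uniform moment bounds obtained above.
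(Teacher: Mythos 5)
Your proof is correct in outline but follows a genuinely different route from the paper's. The paper's argument is essentially a reduction: after observing that $n$ and $m$ are fixed, so that one may work with the $\R_+^{n+m}$-valued vector process rather than the measure-valued one, it establishes the uniform bound $\sup_{K}\E\big[\sup_{t\le T}\langle \bP^{K,n,m}_t,\one\rangle^3+\langle \A^{K,n,m}_t,\one\rangle^3\big]<+\infty$ by the computations of Step 1 of the proof of Proposition~\ref{prop:stochpro}, checks global well-posedness of \eqref{edo} by Cauchy--Lipschitz, and then invokes Theorem 2.1 of Chapter 11 of Ethier and Kurtz \cite{ethierkurtz} as a black box. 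You instead reprove that theorem by hand, via the semimartingale decomposition \eqref{eq:semimartP}--\eqref{eq:semimartA}, Doob/BDG control of the martingale part, Gronwall, and Borel--Cantelli; this is the standard argument underlying the Ethier--Kurtz result and the scheme of \cite{fourniermeleard}, so nothing is lost, and your version has the merit of making explicit where the almost-sure (rather than in-probability) convergence comes from, which the paper delegates entirely to the citation.

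One caveat in your write-up. The unlocalized claim $\E\big[\sup_{t\le T}(M^{K,i}_t/K)^4\big]=O(K^{-2})$ is not available under the stated hypotheses: the initial data only have third moments, and $d^P$ and the competition term are unbounded (only locally Lipschitz), so the fourth moment of the number of jumps on $[0,T]$ is not controlled; in particular it is not the third-moment hypothesis \eqref{hyp:moments3} that ``forces'' this estimate. The estimate does hold, with a constant depending on $N$, for the martingale stopped at $\tau^K_N=\inf\{t:\langle \bP^{K,n,m}_t,\one\rangle+\langle \A^{K,n,m}_t,\one\rangle\ge N\}$, and Borel--Cantelli should be applied to the stopped objects. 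Relatedly, removing the localization in an almost-sure statement cannot be done ``thanks to the uniform moment bounds'' alone: uniform-in-$K$ third moments do not yield almost-sure, uniform-in-$K$ boundedness of the paths. The standard fix is a bootstrap: choose $N$ strictly larger than $\sup_{t\le T}\big(\frac{1}{n}\sum_{i}\widetilde P^i_t+\frac{1}{m}\sum_{j}\widetilde A^j_t\big)+1$, observe that the Gronwall bound on $[0,T\wedge\tau^K_N]$ forces the rescaled populations to stay strictly below $N$ there once $K$ is large, and conclude that $\tau^K_N>T$ eventually, almost surely. With this adjustment your argument goes through and recovers the paper's statement.
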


Equations \eqref{edo} is similar to a classical Lotka-Volterra system applied to mutualistic interactions with competition. In our case, the species community is structured by the traits  $x^i$ of plants and $y^j$ of pollinators, and these traits determine the probability and strength of the interactions. In addition, how interactions translate into births and deaths (the so-called \textit{numerical response} in ecological terms) is embedded in functions $g^A$ and $g^P$ (demographic growth) and $k$ and $h$ (competitive kernels). These functions can take any form (see Section \ref{sec:applications} for some examples). As a consequence, \eqref{edo} can capture a large variety of ecological situations. In particular, many ODE models published in the ecological literature are special cases of \eqref{edo} (\textit{e.g.} \cite{thebaultfontaine, bascomptejordanomelianolesen,lever2014sudden}) and therefore can be seen as limits of stochastic individual-based models as defined in Section \ref{sec:stoch-model}.  
Notice that coefficients $c_{ij}^{n,m}$ may be random coefficients as explained in the previous section. However, these coefficients remain constant over time.

\begin{proof}Since the numbers $n$ and $m$ of species are supposed constant, working with the vector processes $(P^{K,i}_., A^{K,j}_.)_{i\in \lbrac 1,n\rbrac, j\in  \lbrac 1,m\rbrac}$ with values in $\R_+^{n+m}$ (and not the measure-valued processes) is here sufficient. Notice that under the assumption \eqref{hyp:moments3}, we can obtain by computations similar to the Step 1 of the proof of Proposition \ref{prop:stochpro} that:
\begin{equation}
   \sup_{K\in \N} \E\Big[\sup_{t\in [0,T]} \langle \mathbf{P}^{K,n,m}_t,\one\rangle^3+\langle \mathbf{A}^{K,n,m}_t,\one\rangle^3\Big]<+\infty.
\end{equation}The global existence and uniqueness of solution to \eqref{edo} can be proved by classical results for ODEs: it follows from the local boundedness and Lipschitz property of the functions on the right-hand side. Then this proposition is a direct application of Theorem 2.1 p.456 in the book by Ethier and Kurtz \cite{ethierkurtz}.
\end{proof}

As a consequence:
\begin{corollary}
Under the assumptions of Proposition \ref{prop_lln}, and for any $T>0$, the sequence of measure-valued processes $(\mathbf{P}^{K,n,m},\mathbf{A}^{K,n,m})_{K\in \N}$ converges a.s. and uniformly in the Skorohod space \\
$\mathbb{D}\left ([0,T],\mathcal{M}_F([0,1])^2 \right)$ to the process $({\mathbf{\widetilde P}}^n,{\mathbf{\widetilde A}}^m)$ defined by:
\[{\mathbf{\widetilde P}}^n_t(dx)=\frac{1}{n}\sum_{i=1}^n \widetilde{P}^{i}_t \delta_{x^i}(dx),\qquad {\mathbf{\widetilde A}}^m_t(dy)=\frac{1}{m}\sum_{j=1}^m \widetilde{A}^{j}_t \delta_{y^j}(dy).\] 
\end{corollary}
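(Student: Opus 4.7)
The plan is to deduce this corollary directly from Proposition~\ref{prop_lln}, exploiting the fact that for each $K$ the random measure $\mathbf{P}^{K,n,m}_t$ is atomic with support in the fixed finite set $\{x^1,\dots,x^n\}$ (and similarly for $\mathbf{A}^{K,n,m}_t$ on $\{y^1,\dots,y^m\}$). Its weights are precisely the renormalized abundances $(P^{K,i}_t/(nK))_{1\leq i\leq n}$, whose uniform convergence has just been established.

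First, I would endow $\mathcal{M}_F([0,1])$ with the bounded Lipschitz metric $d_{BL}$, which metrizes the weak topology since $[0,1]$ is compact Polish. For any $f : [0,1] \to \R$ with $\|f\|_{BL} := \|f\|_\infty + \Lip(f) \leq 1$, one has
\[
\langle \mathbf{P}^{K,n,m}_t - {\mathbf{\widetilde P}}^n_t, f\rangle = \frac{1}{n}\sum_{i=1}^n f(x^i)\Bigl(\frac{P^{K,i}_t}{K} - \widetilde{P}^i_t\Bigr),
\]
so the right-hand side is bounded in absolute value by $\max_i |P^{K,i}_t/K - \widetilde{P}^i_t|$, using $\|f\|_\infty \leq 1$. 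Taking the supremum over such $f$ and then over $t \in [0,T]$ gives
\[
\sup_{t \leq T} d_{BL}\bigl(\mathbf{P}^{K,n,m}_t, {\mathbf{\widetilde P}}^n_t\bigr) \leq \max_{1\leq i\leq n} \sup_{t \leq T}\Bigl|\frac{P^{K,i}_t}{K} - \widetilde{P}^i_t\Bigr|,
\]
which converges to 0 almost surely by Proposition~\ref{prop_lln}. The analogous bound holds for the pollinator measures, yielding almost sure uniform convergence on $[0,T]$ of $(\mathbf{P}^{K,n,m},\mathbf{A}^{K,n,m})$ to $({\mathbf{\widetilde P}}^n,{\mathbf{\widetilde A}}^m)$ for the product of weak topologies on $\mathcal{M}_F([0,1])^2$.

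To lift this to Skorohod convergence on $\mathbb{D}([0,T], \mathcal{M}_F([0,1])^2)$, I would invoke the continuity of the limit path $t \mapsto ({\mathbf{\widetilde P}}^n_t, {\mathbf{\widetilde A}}^m_t)$, which follows from the $C^1$ regularity of the $\widetilde{P}^i_t$ and $\widetilde{A}^j_t$ as solutions of the ODE system \eqref{edo}. When the limit is continuous, Skorohod convergence coincides with uniform convergence in the underlying metric, so the almost sure uniform convergence established above directly implies the claim. There is no substantive obstacle here: the only points requiring care are the choice of a tractable metric for the weak topology on $\mathcal{M}_F([0,1])$ and the identification of uniform and Skorohod convergence when the limit is continuous.
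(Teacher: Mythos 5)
Your proposal is correct and follows essentially the same route as the paper, which treats the corollary as immediate from Proposition~\ref{prop_lln} precisely because all the measures share the fixed finite atom set $\{x^1,\dots,x^n\}$ (resp.\ $\{y^1,\dots,y^m\}$), so convergence of the weights controls the distance between measures uniformly in $t$. The only cosmetic difference is that you work with the bounded Lipschitz metric for the weak topology, whereas the paper's accompanying remark observes that the same reduction in fact yields the stronger total variation convergence; and note that uniform convergence implies Skorokhod convergence unconditionally, so the continuity of the limit path is not actually needed for the direction you use it.
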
In this corollary, the space $\mathcal{M}_F([0,1])^2$ can be embedded with the total variation topology which is stronger than the weak topology: as $n$, $m$, the sequences $(x^1,\dots x^n)$ and $(y^1,\dots y^m)$ remain unchanged, all the measures are absolutely continuous with respect to the same counting measures $\sum_{i=1}^n \delta_{x^i}$ or $\sum_{j=1}^m \delta_{y^j}$. Also, the uniform convergence in the Skorohod space for any $T>0$ yields the convergence for the Skorohod topology on $\R_+$ (see Theorem 16.2 in~\cite{billingsley99}).\\

For the remainder, let us denote by $\Phi^{P,n,m}=(\Phi^{P,n,m}_1,\cdots \Phi^{P,n,m}_n)$ and $\Phi^{A,n,m}=(\Phi^{A,n,m}_1,\cdots \Phi^{A,n,m}_m)$ the applications from $\R_+^{n+m}$ into $\R^n$ and $\R^m$ respectively such that \eqref{edo} rewrites for all $i\in \lbrac 1,n\rbrac$ and $j\in \lbrac 1,m\rbrac$:
\begin{align}
 \frac{d\widetilde{P}^i_t}{dt}=  \Phi^{P,n,m}_i(\widetilde{P}_t,\widetilde{A}_t) \quad \mbox{ and }\quad
 \frac{d\widetilde{A}^j_t}{dt}=  \Phi^{A,n,m}_j(\widetilde{P}_t,\widetilde{A}_t) .\label{edo2}
\end{align}

\subsection{Central limit theorem}

The individual-based model (mathematically described by the SDE of Definition \ref{def_processes}) is very commonly encountered in simulations in Biology, while the ODEs \eqref{edo} are often used in the modelling papers. The law of large number (Proposition \ref{prop_lln}) shows that both points of view are related in the limit $K\rightarrow +\infty$. In this section, we quantify the speed at which the convergence holds by establishing a central limit theorem. This can be useful for instance to compute confidence intervals or study the statistical properties of parameter estimators.

Let us introduce the fluctuation processes:
\begin{equation}
\eta^{K,P}_t= \sqrt{K}\left(\begin{array}{c}
P^{K,1}_t/K - \widetilde{P}^1_t\\
\vdots \\
P^{K,n}_t/K - \widetilde{P}^n_t
\end{array}\right),\qquad \mbox{ and }
\eta^{K,A}_t= \sqrt{K}\left(\begin{array}{c}
A^{K,1}_t/K - \widetilde{A}^1_t\\
\vdots \\
A^{K,m}_t/K - \widetilde{A}^m_t
\end{array}\right).
\end{equation}

\begin{proposition}\label{prop:tcl}
Consider the process of Definition \ref{def_processes}, assume that the functions $b^A$, $b^P$, $d^A$ and $d^P$ are of class $\Co^1$, and  that the initial conditions $(\eta^{K,P}_0, \eta^{K,A}_0)_{K\geq 0}$ converge in distribution towards a deterministic vector $(\widetilde{\eta}^{P}_0, \widetilde{\eta}^{A}_0)$
when $K\rightarrow +\infty$. Under the same assumptions as Proposition \ref{prop_lln}, one has
$$
(\eta^{K,P}_t, \eta^{K,A}_t)_{t\geq 0} \underset{K\to\infty}{\Rightarrow} (\widetilde\eta^{P}_t, \widetilde\eta^{A}_t)_{t\geq 0},
$$
where the converge holds in distribution in the Skorohod space $\mathbb{D}(\R_+,\R^{n+m})$ and where the processes $(\widetilde\eta^{P}_., \widetilde\eta^{A}_.)=(\widetilde{\eta}^{P,i}_., \widetilde{\eta}^{A,j}_.)_{i\in \lbrac 1,n\rbrac, j\in  \lbrac 1,m\rbrac}$ are solutions of the following SDEs driven by $n+m$ independent standard real-valued Brownian motions $(W^{P,i},W^{A,j})_{i\in \lbrac 1,n\rbrac, j\in  \lbrac 1,m\rbrac}$: for all $i\in \lbrac 1,n\rbrac$ and $j\in \lbrac 1,m\rbrac$,
\begin{equation}
\begin{aligned}
 \widetilde\eta^{P,i}_t= &\widetilde\eta^{P,i}_0 +\int_0^t \sqrt{\bigg(b^P\big( \widetilde R^{A,i}_s\big)+d^P\big( \widetilde R^{A,i}_s\big)+\frac{1}{n}\sum_{\ell=1}^n k(x^i,x^\ell)\widetilde  P^\ell_s\bigg) \widetilde{P}^i_s}\ dW^{P,i}_s \\
& +\int_0^t \left[\sum_{\ell=1}^n \frac{\partial \Phi^{P,n,m}_{i}}{\partial p^\ell} ( \widetilde P_s,  \widetilde A_s)\widetilde\eta^{P,\ell}_s + \sum_{\ell=1}^m \frac{\partial \Phi^{P,n,m}_i}{\partial a^\ell} ( \widetilde P_s,  \widetilde A_s)\widetilde\eta^{A,\ell}_s \right]  ds  ,\\
\widetilde\eta^{A,j}_t= &\widetilde\eta^{A,j}_0 +\int_0^t \sqrt{\bigg(b^A\big( \widetilde R^{P,j}_s\big)+d^A\big( \widetilde R^{P,j}_s\big)+\frac{1}{m}\sum_{\ell=1}^m h(y^j,y^\ell) \widetilde A^\ell_s \bigg) \widetilde{A}^j_s}\ dW^{A,j}_s  \\
& +\int_0^t \left[\sum_{\ell=1}^n \frac{\partial \Phi^{A,n,m}_j}{\partial p^\ell} (\widetilde P_s, \widetilde A_s)\widetilde\eta^{P,\ell}_s + \sum_{\ell=1}^m \frac{\partial \Phi^{A,n,m}_j}{\partial a^\ell} (\widetilde P_s, \widetilde A_s)\widetilde\eta^{A,\ell}_s \right]  ds.
\label{edsTCLbis}
\end{aligned}
\end{equation}
\end{proposition}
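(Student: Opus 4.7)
The plan is the classical Kurtz-type fluctuation analysis: perform a first-order Taylor expansion of the drift around the LLN limit, apply a functional martingale CLT to the remaining martingale part, and identify the limit via tightness and strong uniqueness of a linear SDE. Setting $\bar{P}^{K,i}:=P^{K,i}/K$ and $\bar{A}^{K,j}:=A^{K,j}/K$, Proposition~\ref{prop:stochpro} gives the semi-martingale decompositions $\bar{P}^{K,i}_t=\bar{P}^{K,i}_0+\int_0^t \Phi^{P,n,m}_i(\bar P^K_s,\bar A^K_s)\,ds+M^{K,i}_t/K$ and analogously for $\bar{A}^{K,j}$. Subtracting the ODE~\eqref{edo2} and multiplying by $\sqrt{K}$ yields the fluctuation equation
\begin{equation*}
\eta^{K,P,i}_t=\eta^{K,P,i}_0+\int_0^t \sqrt{K}\bigl[\Phi^{P,n,m}_i(\bar P^K_s,\bar A^K_s)-\Phi^{P,n,m}_i(\widetilde P_s,\widetilde A_s)\bigr]ds+\frac{M^{K,i}_t}{\sqrt K},
\end{equation*}
together with the analogous identity for $\eta^{K,A,j}$.

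Since $b^P,b^A,d^P,d^A$ are $\Co^1$, so are $\Phi^{P,n,m}_i$ and $\Phi^{A,n,m}_j$. A first-order Taylor expansion around $(\widetilde P_s,\widetilde A_s)$ replaces the bracket above by the linear combination $\sum_\ell \tfrac{\partial\Phi^{P,n,m}_i}{\partial p^\ell}(\widetilde P_s,\widetilde A_s)\,\eta^{K,P,\ell}_s+\sum_\ell \tfrac{\partial\Phi^{P,n,m}_i}{\partial a^\ell}(\widetilde P_s,\widetilde A_s)\,\eta^{K,A,\ell}_s$ plus a quadratic remainder $\varepsilon^{K,i}_s$ satisfying $|\varepsilon^{K,i}_s|\leq C\sqrt K\bigl(\|\bar P^K_s-\widetilde P_s\|^2+\|\bar A^K_s-\widetilde A_s\|^2\bigr)$ on the event that $(\bar P^K_s,\bar A^K_s)$ stays in a compact. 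For the martingale part, $M^{K,i}/\sqrt K$ has predictable quadratic variation $K^{-1}\langle M^{K,i}\rangle_t$ which, by Proposition~\ref{prop:stochpro} combined with Proposition~\ref{prop_lln}, converges a.s.\ uniformly on $[0,T]$ to the deterministic, absolutely continuous function
\begin{equation*}
V^{P,i}_t:=\int_0^t \Bigl(b^P(\widetilde R^{A,i}_s)+d^P(\widetilde R^{A,i}_s)+\tfrac{1}{n}\textstyle\sum_\ell k(x^i,x^\ell)\,\widetilde P^\ell_s\Bigr)\widetilde P^i_s\,ds.
\end{equation*}

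The jumps of $M^{K,i}/\sqrt K$ are of order $1/\sqrt K\to 0$, and the martingales $M^{K,i}$, $M^{K,i'}$ are orthogonal for $i\neq i'$ because the indicators $\ind_{i=k}$ select disjoint pieces of the underlying Poisson measure $Q^P_B,Q^P_D$; likewise $M^{K,i}$ and $M^{K,j}$ are orthogonal because $Q^P_\cdot$ and $Q^A_\cdot$ are independent. Rebolledo's functional martingale CLT (see e.g.~\cite{ethierkurtz}, Ch.~7) therefore gives the joint convergence of $(M^{K,i}/\sqrt K)_{i\in\lbrac 1,n\rbrac}$ and $(M^{K,j}/\sqrt K)_{j\in\lbrac 1,m\rbrac}$ in $\D(\R_+,\R^{n+m})$ to the stochastic integrals appearing in~\eqref{edsTCLbis}, driven by $n+m$ mutually independent standard Brownian motions. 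Tightness of $(\eta^{K,P},\eta^{K,A})$ in $\D(\R_+,\R^{n+m})$ then follows from the Aldous-Rebolledo criterion combined with an $L^2$-bound on $\sup_{t\leq T}(\|\eta^{K,P}_t\|^2+\|\eta^{K,A}_t\|^2)$ obtained from the fluctuation equation by Gronwall, using the uniform boundedness of the partial derivatives of $\Phi^{P,n,m}$ and $\Phi^{A,n,m}$ on the compact traced by $(\widetilde P,\widetilde A)$ on $[0,T]$. Any accumulation point solves the linear SDE~\eqref{edsTCLbis}; strong uniqueness for this Gaussian Ornstein--Uhlenbeck-type system with continuous deterministic coefficients identifies the limit and upgrades convergence to the whole sequence.

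The principal difficulty is the control of the Taylor remainder $\varepsilon^{K,i}_s$: the LLN alone only provides $\|\bar P^K-\widetilde P\|\to 0$, which does not defeat the prefactor $\sqrt K$. I would first sharpen the LLN into the rate $\E\bigl[\sup_{t\leq T}(\|\bar P^K_t-\widetilde P_t\|^2+\|\bar A^K_t-\widetilde A_t\|^2)\bigr]=\mathcal{O}(1/K)$, obtained by applying Doob's $L^2$-inequality to $M^{K,i}/K$ and $M^{K,j}/K$ (whose quadratic variations are of order $1/K$ by Proposition~\ref{prop:stochpro}) together with a Gronwall argument exploiting the local Lipschitz regularity of $\Phi^{P,n,m}$ and $\Phi^{A,n,m}$. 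Combined with the $L^3$-moment propagation inherited from \eqref{hyp:moments3}, this yields $\E[\sup_{t\leq T}|\varepsilon^{K,i}_t|]\to 0$ and closes the argument.
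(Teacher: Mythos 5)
Your proposal is correct and follows essentially the same route as the paper: the paper's proof consists of a citation of Theorem 2.3, Chapter 11 of Ethier--Kurtz together with a sketch of precisely your argument (semimartingale decomposition of $\eta^{K,P,i}$, Taylor expansion of the drift around the LLN limit to recognize the Jacobian of $\Phi^{P,n,m}$, martingale part converging to the Brownian integrals, and Aldous--Rebolledo tightness plus uniqueness of the limiting linear SDE). You give more detail than the paper on the Taylor remainder (which the paper leaves as an unquantified $\varepsilon_K$); the only caveat is that under the stated $\Co^1$ hypothesis the quadratic bound $|\varepsilon^{K,i}_s|\leq C\sqrt{K}\,\bigl(\|\bar P^K_s-\widetilde P_s\|^2+\|\bar A^K_s-\widetilde A_s\|^2\bigr)$ presumes Lipschitz derivatives, whereas with mere continuity of the derivatives the remainder is $o(1)\,\|\eta^K_s\|$ uniformly on compacts, which closes the Gronwall argument equally well.
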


Notice that the limiting fluctuation process is an Ornstein-Uhlenbeck process that is centered. Systems of SDEs have already been introduced in the literature to describe the evolution of communities, but they are to our knowledge of a different nature. In \eqref{edsTCLbis} the noise relates to the fluctuation of the stochastic individual-based model around its deterministic limit \eqref{edo2} for $K\rightarrow +\infty$. In other works, such as in \cite{bunin} for instance, the white noise corresponds to a diffusive limit obtained when considering a different longer time-scale, as in the Donsker theorem (see \textit{e.g.} \cite[Section 4.2]{champagnatferrieremeleard}): the random noise comes from the rapid successions of birth and death events in this accelerated time-scale. In other papers, following the steps of May \cite{may72}, \cite{benarousfyodorovkhoruzhenko,fyodorovkhoruzhenko}introduce a system of equations, coupled \textit{via} a smooth random vector field, which describes the approximated dynamical system around an equilibrium state: in this case, the noise models the complexity and nonlinearity of interactions.

\begin{proof}[Proof of Proposition \ref{prop:tcl}]
It is a direct application of Theorem 2.3 from Chapter 11 of \cite{ethierkurtz}. {The proof can also be carried from the semi-martingale expressions of $P^{K,i}_t$ and $A^{K,j}_t$ \eqref{eq:semimartP}-\eqref{eq:semimartA} and \eqref{edo2}. To understand \eqref{edsTCLbis}, consider for example the $i$th component $\eta^{K,P,i}_t$ of $\eta^{K,P}_t$. The other terms can be treated similarly. We sketch here this alternative proof:
\begin{align}
    \eta^{K,P,i}_t= & \eta^{K,P,i}_0+  \sqrt{K}\int_0^t \frac{1}{n}\sum_{\ell=1}^n k(x^\ell,x^i)\Big(\widetilde{P}^\ell_s \widetilde{P}^i_s-\frac{P^{K,\ell}_s}{K} \frac{P^{K,i}_s}{K}\Big)\ ds+\sqrt{K}\frac{M^{K,i}_t}{K}\nonumber\\
    + & \sqrt{K} \int_0^t \Big[g^P\Big(\sum_{j\sim i}c_{ij}^{n,m} \frac{A^{K,j}_s}{K}\Big)\frac{P^{K,i}_s}{K}-g^P\Big(\sum_{j\sim i}c_{ij}^{n,m} \widetilde{A}^{j}_s\Big)\widetilde{P}^{i}_s\Big] ds\nonumber\\
        = & \eta^{K,P,i}_0 +  \frac{1}{n}\sum_{\ell=1}^n k(x^\ell,x^i)\int_0^t \Big[\eta^{K,P,\ell}_s \frac{P^{K,i}_s}{K}+ \widetilde{P}^{\ell}_s \eta^{K,P,i}_s\Big] ds+ \sqrt{K}\frac{M^{K,i}_t}{K}\label{etape2}\\
    + & \int_0^t \Big[g^P\Big(\sum_{j\sim i}c_{ij}^{n,m} \frac{A^{K,j}_s}{K}\Big) \eta^{K,P,i}_s + \sqrt{K} \widetilde{P}^i_s \Big(g^P\Big(\sum_{j\sim i}c_{ij}^{n,m} \frac{A^{K,j}_s}{K}\Big)-g^P\Big(\sum_{j\sim i}c_{ij}^{n,m} \widetilde{A}^{j}_s\Big)\Big)\Big] ds,\nonumber
\end{align}where $M^{K,i}$ is the martingale appearing in \eqref{eq:semimartP} with quadratic variation \eqref{eq:martingaleP}. Because the birth and death rates are assumed to be of class $\Co^1$, so is $g^P$ and a Taylor expansion can be used for the last term:
\begin{align}
    \sqrt{K}\Big(g^P\Big(\sum_{j\sim i}c_{ij}^{n,m} \frac{A^{K,j}_s}{K}\Big)-g^P\Big(\sum_{j\sim i}c_{ij}^{n,m} \widetilde{A}^{j}_s\Big)\Big) 
    =&
     (g^P)'\Big(\sum_{j\sim i}c_{ij}^{n,m} \widetilde{A}^{j}_s\Big) \times \sum_{j\sim i}c_{ij}^{n,m} \sqrt{K}\Big(\frac{A^{K,j}_s}{K}-\widetilde{A}^j_s\Big)+ \varepsilon_K\nonumber\\
     = &  (g^P)'\Big(\sum_{j\sim i}c_{ij}^{n,m} \widetilde{A}^{j}_s\Big) \times \sum_{j\sim i}c_{ij}^{n,m} \eta^{K,A,j}_s + \varepsilon_K,\label{etape3}
\end{align}where $\varepsilon_K$ is a remainder term. From \eqref{etape2} and \eqref{etape3}, we recognize that:
\[\eta^{K,P,i}_t=\eta^{K,P,i}_0+\sqrt{K}\frac{M^{K,i}_t}{K}+\int_0^t\Big[\sum_{\ell=1}^n \frac{\partial \Phi^{P,n,m}_{i}}{\partial p^\ell} ( \widetilde P_s,  \widetilde A_s)\widetilde\eta^{P,\ell}_s + \sum_{\ell=1}^m \frac{\partial \Phi^{P,n,m}_i}{\partial a^\ell} ( \widetilde P_s,  \widetilde A_s)\widetilde\eta^{A,\ell}_s\Big] ds+\varepsilon_K.\]

Using the Aldous-Rebolledo criterion (\textit{e.g.} \cite{joffemetivier}) it is possible to prove that the distributions of the processes $(\eta^{K,P},\eta^{K,A})_K$ form a tight family with a unique limiting value that solves the SDEs \eqref{edsTCLbis}.}
\end{proof}

\section{Continuous limits when abundances and the number of species are large}\label{sec:graphon}

We now consider that the numbers of plant and pollinator species in the network tend to infinity.  Taking the limit $n,m\rightarrow +\infty$, we obtain equations describing the evolution of a population consisting in a \textit{continuum} of species. \\

Let the traits of plants and pollinators be chosen according to i.i.d. random variables with cumulative distribution functions $F_P$ and $F_A$ respectively. To order species according to their respective traits, we proceed as follows: let $(\tilde{u}_i)_{i\geq 1}$ and $(\tilde{v}_j)_{j\geq 1}$ be two sequences of i.i.d. random variables with uniform distribution in $[0,1]$;  for any $n,m\in \N^2$, let $(u^{i,n})_{i\in \lbrac 1,n\rbrac}$ be the ordered $n^{th}$ first values of $(\tilde{u}_i)_{i\geq 1}$ and  $(v^{j,m})_{j\in \lbrac 1,m\rbrac}$ be the ordered $m^{th}$ first values of $(\tilde{v}_j)_{j\geq 1}$, then for any $i\in \lbrac 1,n\rbrac$ and $j\in \lbrac 1,m\rbrac$, \[x^{i,n}=F_P^{-1}(u^{i,n})\qquad \mbox{ and }\qquad y^{j,m}=F_A^{-1}(v^{j,m}).\]
The indices $n$ and $m$ will be dropped when no confusion is possible.\\

Recall that the plant-pollinator network is defined by its adjacency matrix $(G^{n,m}_{ij})_{(i,j)\in \lbrac 1,n\rbrac\times \lbrac 1,m\rbrac }$ and the weights $c^{n,m}_{ij}$ of the interaction $i\sim j$ whenever $G^{n,m}_{ij}=1$. The random variables $G^{n,m}_{ij}$ are supposed to satisfy Assumption \ref{hyp:Gij} for all $n$ and $m\in \N$: the entries of the adjacency matrix are here independent Bernoulli r.v. with parameters $\phi(x^i,y^j)$ for the term $(i,j)\in \lbrac 1,n\rbrac \times \lbrac 1,m\rbrac$. For the harvesting coefficients $c^{n,m}_{ij}$, the following assumption is made:

\begin{hyp}\label{ass_funcc}
We assume that there exists for any $n,m\in \N^2$ a continuous function $c^{n,m}$ satisfying the condition \eqref{def:Cijnm} of Assumption \ref{hyp:cij}.
Additionally, we assume that there exists a Lipschitz continuous function $c\, :\, [0,1]^2\mapsto \R$ to which the sequences of functions $nc^{n,m}(.,.)$ and $mc^{n,m}(.,.)$ converge uniformly, in $L^{\infty}([0,1]^2)$, when $n$ and $m$ tend to $+\infty$.
\end{hyp}
The idea in this assumption is that the function $c(.,.)$ is a ``harvesting function'' underlying the matrix $(c_{ij}^{n,m})_{i\in \lbrac 1,n\rbrac,j\in \lbrac 1,m\rbrac}$ of harvesting coefficients. These assumptions are satisfied by Examples~\ref{ex_eco}.

Notice also that Assumption~\ref{ass_funcc} implies that $n$ and $m$ grow to infinity with a similar speed. To state this idea in specific terms, we assume that there exists a sequence $(\alpha_n)_{n\geq 1}$ such that
\begin{equation}
    m=\alpha_n n \quad \text{and} \quad \lim_{n\to\infty}\alpha_n=1.
\end{equation}
In what follows, we write $n\to\infty$, $m\to\infty$ or $n,m\to\infty$ indistinctly.

\begin{proposition}\label{prop_largenbspecies}
Let us assume Assumptions~\ref{hyp:Gij}, \ref{hyp_functions} and \ref{ass_funcc} hold and
that there exist deterministic continuous bounded densities $\bar p_0$ and $\bar a_0$ such that the following weak convergences hold:
\begin{equation}\label{ass_initcond}
\lim_{n\rightarrow +\infty} \frac{1}{n}\sum_{i=1}^n  \widetilde P^i_0 \delta_{x^i}\stackrel{w}{ =} \bar p_0(x)dx,\quad
\lim_{m\rightarrow +\infty} \frac{1}{m}\sum_{j=1}^m  \widetilde A^j_0 \delta_{y^j}\stackrel{w}{ =} \bar a_0(y)dy \quad \text{a.s.}
\end{equation}
{such that for all $n,m\in \N$, the sequences $(\tilde P^i_0)_{i\in \lbrac 1, n\rbrac}$ and $(\tilde A^j_0)_{j\in \lbrac 1, m\rbrac}$ only depend on $(x^i,y^j)_{(i,j)\in \lbrac 1,n \rbrac \times \lbrac 1,m\rbrac}$.} 
We also assume that there is a constant $C>0$ such that a.s., 
\begin{equation}\label{hyp:boundedCI}
\sup_{n,m} \big \langle \widetilde\bP^{n,m}_0,1\rangle + \langle \widetilde\A^{n,m}_0,1\rangle\big)<C.
\end{equation}
Then, for any $T\geq 0$, and for $n,m\rightarrow +\infty$, the sequence of measure-valued processes
  \[\left(\widetilde\bP^{n,m}_t(dx),\widetilde\A_t^{n,m}(dy)\right)_{t\geq 0}:=\Big(\frac{1}{n}\sum_{i=1}^n  \widetilde P^i_t \delta_{x^i},\ \frac{1}{m}\sum_{j=1}^m  \widetilde A^j_t \delta_{y^j}\Big)_{t\geq 0}, \quad n,m\geq 1\]converge in law in $\Co([0,T],\mathcal{M}_F([0,1])^2)$ to a deterministic process $(\bar{\bP},\bar{\A})$ in $\Co([0,T],\mathcal{M}_F([0,1])^2)$, such that:\\
(i) for all $t\geq 0$, $\bar{\bP}_t$ and $\bar{\A}_t$ admit densities $\bar{p}_t$ and $\bar{a}_t$ with respect to the Lebesgue measure on $[0,1]$, \\
(ii) $(\bar{\bP},\bar{\A})$ is the unique solution, for $f\in \Co([0,1],\R)$, of
  \begin{equation} \label{edp:cinet}
      \begin{aligned}
  \int_0^1 f(x)d\bar{\bP}_t(x)= &   \int_0^1 f(x)\bar{p}_0(x)dx\\
  &+\int_0^t \int_0^1 f(x)\Big[ g^P\Big(\int_0^1 c(x,y)\phi(x,y)\bar{a}_s(y)dy\Big)-k\star\bar{p}_s(x)\Big] \bar{p}_s(x)dx\ ds , \\
  \int_0^1 f(y)d\bar{\A}_t(y)= &   \int_0^1 f(y)\bar{a}_0(y)dy\\
    & +\int_0^t \int_0^1 f(y) \Big[g^A\Big(\int_0^1 c(x,y)\phi(x,y)\bar{p}_s(x)dx\Big)-h\star\bar{a}_s(y) \Big]\bar{a}_s(y)dy\ ds,  \end{aligned}
    \end{equation}
    where $k\star\nu$ denotes the convolution product and $\phi$ was introduced in Assumption~\ref{hyp:Gij}. 
In all this statement, the space $\mathcal{M}_F^2([0,1])$ is endowed with its weak topology.
\end{proposition}
The proof of this Proposition is given in Appendix~\ref{app_proof}. Notice that assuming \eqref{hyp:boundedCI} is not very restrictive in our biological context, since by \eqref{ass_initcond},  $\langle \widetilde\bP^{n,m}_0,1\rangle$ (resp. $\langle \widetilde\A^{n,m}_0,1\rangle$) converges a.s. to $\|\bar{p}_0\|_1$ (resp. $\|\bar{a}_0\|_1$) which is finite. This just ensures that the constant $C$ in \eqref{hyp:boundedCI} is not a r.v.\\

Equation \eqref{edp:cinet} is analogous to the ODE system given in Equation \eqref{edo}. However, here species are not considered as discrete but they are continuously distributed along a continuous trait. Modelling the dynamics of a plant-pollinator community with the PDEs given in Equation \eqref{edp:cinet} is thus a functional rather than a species representation of the same system. The connections are modelled in \eqref{edp:cinet} by $\phi$. This function $\phi\, :\, [0,1]^2 \mapsto [0,1]$ is a graphon (see \cite{lovaszbook}): it can be understood (in this case) as a graph on the node sets
$[0, 1]$ for the plants and $[0,1]$ for the pollinators, where $\phi(x,y)$ describes the density of connections between plants $x$ and pollinators $y$.  The term $c \phi$ reflects both the topology ($\phi$) and the intensity ($c$) of plant-pollinator interactions throughout the community depending on the trait values $x$ and $y$ involved.

\begin{remark}
Note that one can formally write a strong form of \eqref{edp:cinet}. This leads to the following system of integral equations on the densities  $\bar{p}_t$ and $\bar{a}_t$:
\begin{equation}
    \label{edp:cinetStrong}
    \begin{aligned}
    \partial_t \bar{p}_t(x)= 
  & \Big[ g^P\Big(\int_0^1 {c(x,y)}\phi(x,y)\bar{a}_t(y)dy\Big) -  k\star\bar{p}_t(x) \Big] \bar{p}_t(x),\\
  \partial_t\bar{a}_t(y)=&  \Big[g^A\Big(\int_0^1 {c(x,y)}\phi(x,y)\bar{p}_t(x)dx\Big)
    -  h\star\bar{a}_t(y) \Big]\bar{a}_t(y) ,
    \end{aligned}
\end{equation}
    with initial conditions $\bar p_0$ and $\bar a_0$.
\end{remark}

\section{Study of the limiting dynamical systems}\label{sec:applications}

Let us now study the behaviour of the limiting dynamical systems that have been obtained: the ODE system \eqref{edo} when $K\rightarrow +\infty$, and the kinetic PDEs \eqref{edp:cinet} when additionally $n,m\rightarrow +\infty$.

\subsection{Stationary states of the ODE system \eqref{edo} }

In this section, we give some results about the dynamics of solutions to Systems~\eqref{edo}, for general forms of $g^P$ and $g^A$.
Notice that any stationary points $(\widetilde{P}^1_\infty,\dots \widetilde{P}^n_\infty, \widetilde{A}^1_\infty,\dots \widetilde{A}^m_\infty)$ of \eqref{edo} are solutions to the following system:
\begin{equation}\label{pointsstationnaires}
\begin{aligned}
\forall i\in \lbrac 1,n\rbrac,\ & \widetilde{P}^i_\infty= 0\mbox{ or }g^P\big(\sum_{j\sim i}c_{ij}^{n,m}\widetilde{A}^j_\infty\big)=\frac{1}{n}\sum_{\ell=1}^n k(x^i,x^\ell)\widetilde{P}^\ell_\infty,\\
\forall j\in \lbrac 1,m\rbrac,\ & \widetilde{A}^j_\infty= 0\mbox{ or }g^A\big(\sum_{i\sim j}c_{ij}^{n,m}\widetilde{P}^i_\infty\big)=\frac{1}{m}\sum_{\ell=1}^m h(y^j,y^\ell)\widetilde{A}^\ell_\infty.
\end{aligned}
\end{equation}

The community with no individual is an obvious stationary point called the \textit{null equilibrium} in the rest of the paper:
$$
\widetilde{P}^i_\infty=0 \text{ for all } i\in \{1,..n\} \quad \text{and}\quad \widetilde{A}^j_\infty=0 \text{ for all } j\in \{1,..m\},
$$
is a stationary state of \eqref{edo}. The local stability of this stationary state depends only on the sign of $g^P(0)$ and $g^A(0)$.

\begin{remark}\label{rque:caspart}
    According to the expression of the non-trivial equilibria, a plant-pollinator community can exist, \textit{i.e.} $\widetilde P^i_\infty>0$ and $\widetilde A^j_\infty>0$, as soon as the interactions between plants and pollinators translate into positive growth rate for both plants and pollinators. Despite that this condition is little restrictive, a non-trivial equilibrium is possible only if trajectories have to converge to it, (\textit{i.e.} a non-trivial equilibrium has to be stable). The stability of an equilibrium can be more tricky to study and obtain in general.
\end{remark}

\begin{lemma}\label{lem_stability}
If $g^P(0)\vee g^A(0)<0$, the null equilibrium 
is locally stable.\\
If $g^P(0)\vee g^A(0) >0$, the null equilibrium is locally unstable.
\end{lemma}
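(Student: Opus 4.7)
The plan is to linearize \eqref{edo} at the origin and read off local stability from the spectrum of the linearization; because every component of the vector field carries a factor $\widetilde{P}^i$ or $\widetilde{A}^j$, the Jacobian at $0$ will be diagonal, so the eigenvalues are immediately identifiable.

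First, I would compute $J := D\Phi^{n,m}(0)$ for the vector field $(\Phi^{P,n,m},\Phi^{A,n,m})$ of \eqref{edo2}. Writing
\[
\Phi^{P,n,m}_i(\widetilde{P},\widetilde{A})=\Big(g^P\big(\textstyle\sum_{j\sim i}c^{n,m}_{ij}\widetilde{A}^j\big)-\frac{1}{n}\textstyle\sum_{\ell=1}^n k(x^i,x^\ell)\widetilde{P}^\ell\Big)\widetilde{P}^i,
\]
Leibniz's rule shows that every partial derivative of $\Phi^{P,n,m}_i$ contains either a remaining factor $\widetilde{P}^i$ or a factor coming from differentiating the bracket multiplied by $\widetilde{P}^i$. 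Evaluated at $(\widetilde{P},\widetilde{A})=0$, all such terms vanish except $\partial_{\widetilde{P}^i}\Phi^{P,n,m}_i(0)=g^P(0)$. The symmetric computation gives $\partial_{\widetilde{A}^j}\Phi^{A,n,m}_j(0)=g^A(0)$ and all remaining partials equal to zero. Hence $J=\Diag\bigl(g^P(0),\dots,g^P(0),g^A(0),\dots,g^A(0)\bigr)$, with $n$ copies of $g^P(0)$ and $m$ copies of $g^A(0)$.

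Second, I would conclude by standard linear stability theory. If $g^P(0)\vee g^A(0)<0$, every eigenvalue of $J$ is strictly negative, so by the Hartman--Grobman theorem (applicable as soon as the vector field is $\mathcal{C}^1$ near the origin, which follows from differentiability of $g^P$, $g^A$, $k$, $h$) the origin is locally exponentially asymptotically stable. If $g^P(0)\vee g^A(0)>0$, $J$ has at least one strictly positive eigenvalue, and the same theorem yields instability.

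To avoid any $\mathcal{C}^1$ assumption beyond the Lipschitz setting of Assumption~\ref{hyp_functions} in the unstable case, I would add a direct argument: pick $i$ (resp.\ $j$) with $g^P(0)>0$ (resp.\ $g^A(0)>0$), and initialize the trajectory on the one-dimensional invariant axis $\{\widetilde{A}\equiv 0,\ \widetilde{P}^\ell=\varepsilon\,\delta_{\ell,i}\}$. Along this axis the system reduces to the scalar logistic equation $\dot{u}=(g^P(0)-\frac{1}{n}k(x^i,x^i)u)u$, which drives $u$ away from $0$ for any $\varepsilon>0$; this witnesses instability. The main (minor) obstacle is simply to ensure the regularity needed for Hartman--Grobman in the stable direction, and in the unstable direction the axis-reduction sidesteps it entirely.
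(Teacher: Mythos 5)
Your proof is correct and follows essentially the same route as the paper: computing the Jacobian at the origin, observing that it is diagonal with eigenvalues $g^P(0)$ (multiplicity $n$) and $g^A(0)$ (multiplicity $m$), and concluding via the Hartman--Grobman theorem. The supplementary axis-reduction argument for the unstable case is a harmless (and slightly more robust) addition, but the core argument coincides with the paper's.
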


\begin{proof}
From the Hartman–Grobman theorem, we know that local stability is given by the signs of the Jacobian matrix eigenvalues. The Jacobian matrix around the null equilibrium can be directly computed:
\begin{equation*}
    \begin{pmatrix}
    g^P(0)&0&0&...&0&0\\
    0&g^P(0) &0&...&0&0\\
    ...&...&...&...&...&...\\
    0&0&...&0&g^A(0)&0\\
    0&0&...&0&0&g^A(0)\\
    \end{pmatrix},
\end{equation*}
whose eigenvalues are $g^P(0)$ with multiplicity $n$ and $g^A(0)$ with multiplicity $m$. This ends the proof.
\end{proof}

\subsection{Study of a plant-pollinator interaction with a trade-off for plants}

As explained in Remark \ref{rque:caspart}, more precise functional forms for the birth and death rates are needed to carry out further computation.\\

In this Section, we give results for the following particular forms of the individual growth rate functions:
\begin{equation}
\begin{aligned}
g^P(R^A)=& b^P(R^A)-d^P(R^A)=\frac{\alpha_P R^A}{\beta_P+\gamma_P R^A}-(d_P+\delta_P R^A), \\
g^A(R^P)=& b^A(R^P)-d^A(R^P)=\frac{\alpha_A R^P}{\beta_A+\gamma_A R^P}-d_A,\label{growth-rates}
\end{aligned}
\end{equation} 
where $R^A$ and $R^P$ are the total resources collected respectively by plants and pollinators (Eqs. \eqref{Rij:model2} and \eqref{Rij:model3}). The parameters $\alpha_P$, $\alpha_A$, $\beta_P$, $\beta_A$, $\gamma_P$, $\gamma_A$, $d_A$, $d_P$ and $\delta_P$ are assumed positive. Note that according to Lemma~\ref{lem_stability}, the null equilibrium is stable in this particular case.

On the right-hand side of Equation~\eqref{growth-rates}, the death rate is an increasing function of the resources exchanged for plants $R^A$. 
This reflects an interaction trade-off for the plant, \textit{i.e.} it is supposed that there is a cost for interacting with pollinators due to nectar production, leaves consumption, etc. 
Graphical representations of $g^P$ and $g^A$ are given in Figure~\ref{fig_gAgP}.
\begin{figure}[h]
\begin{center}
    \includegraphics[width=1.0\textwidth]{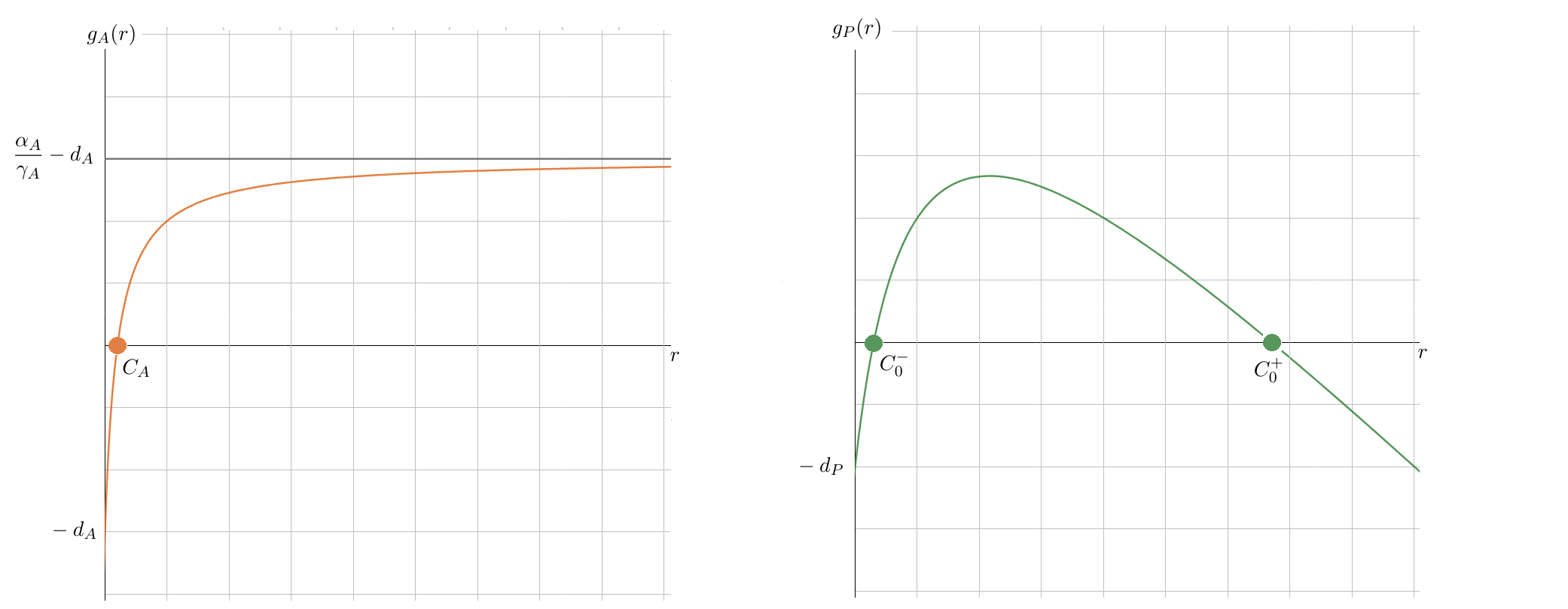}
    \caption{\label{fig_gAgP} Graphical representation of $g_A$ and $g_P$}
\end{center}
\end{figure}
The form given here to the growth rates (or \textit{numerical responses}) $g^P$ and $g^A$ can be found in \cite{holland2008}. It assumes that there is no competition between plants for collecting resources from pollinators, and that there is no competition between pollinators for collecting resources from plants.\\

We state an additional assumption on the parameters of both functions to avoid the case where one growth rate is never positive.
\begin{hyp}\label{hyp:CNequilibre}
Assume that there exist $r^P>0$ and $r^A>0$ such that
$$
g^P(r^P)>0 \quad \text{and} \quad g^A(r^A)>0.
$$
\end{hyp}

\subsubsection{Case $n=m=1$} 

Let us first consider the particular case of a single plant species interacting with a single pollinator species. In other words, we consider the system \eqref{edo} with $n=m=1$ and omit the indices and exponents $i=1$ and $j=1$ for the sake of simplicity (so that $\widetilde{P}^1_t$ becomes ${P}_t$ and $c_{11}^{1,1}$ becomes $c$ for instance). We also assume that the two species interact, \textit{i.e.} $G_{11}=1$. Otherwise, computations are trivial: the community goes to extinction as only death events occur. We have then:
\begin{equation} 
\begin{aligned}
    \frac{d{P}_t}{dt} &= \left(g^P\big(c {A}_t\big)-k{P}_t\right){P}_t \\
    \frac{d{A}_t}{dt} &= \left(g^A\big(c {P}_t\big)-h{A}_t\right){A}_t \label{eq_1species}
\end{aligned}
\end{equation}
(see \cite{holland2010} for a similar ODE model in the ecology literature).

As discussed in Lemma \ref{lem_stability}, the null equilibrium is a locally stable equilibrium of the system \eqref{eq_1species}. Let us search for positive equilibrium. To help the study, we may draw some phase plan of the system.
(see Figure \ref{fig:phaseplan}). 

\begin{figure}[!ht]
    \centering
    \begin{minipage}{0.48\textwidth}
    \includegraphics[width=0.95\textwidth]{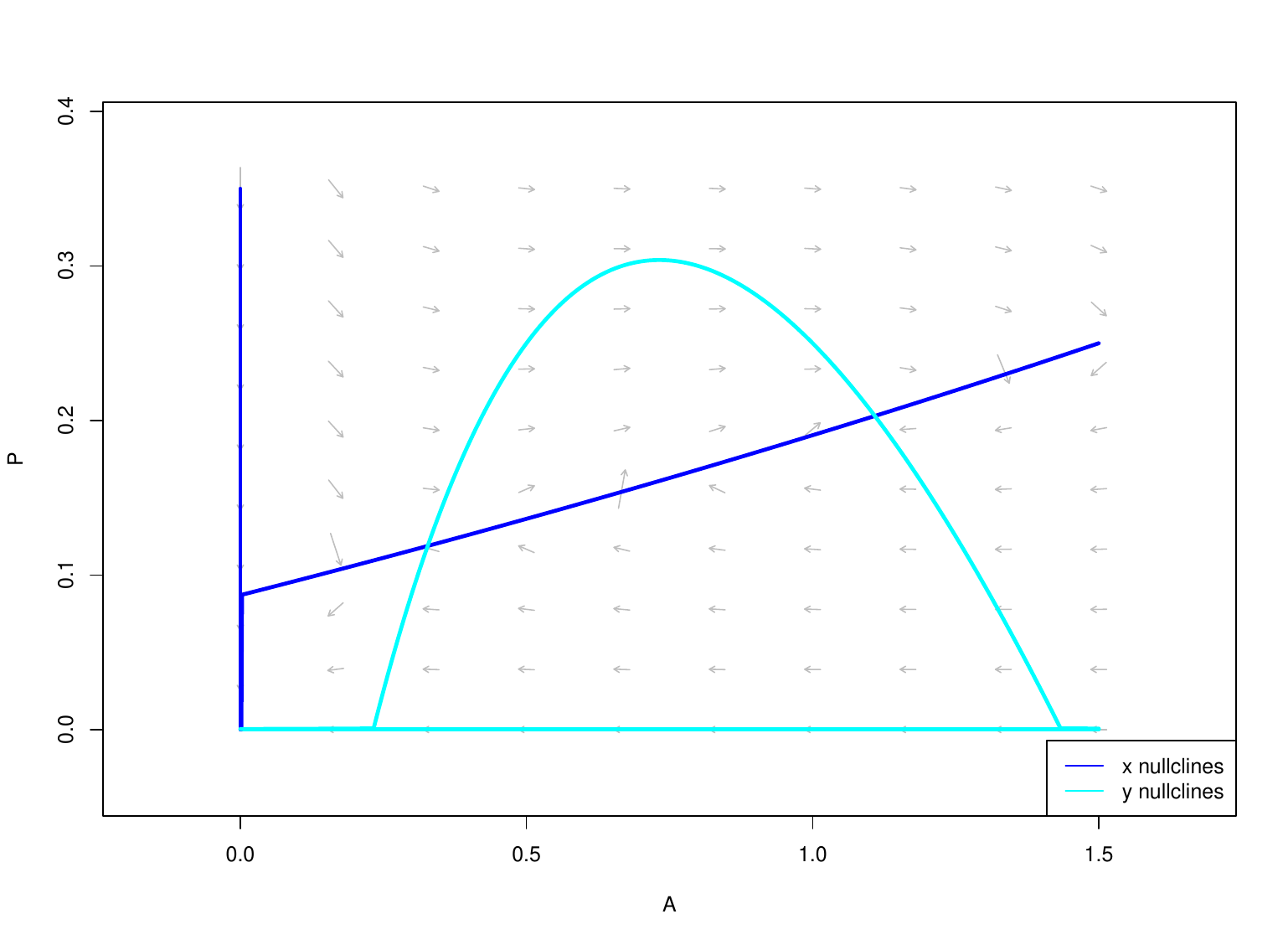}
    \end{minipage}
    \begin{minipage}{0.48\textwidth}
    \includegraphics[width=0.95\textwidth]{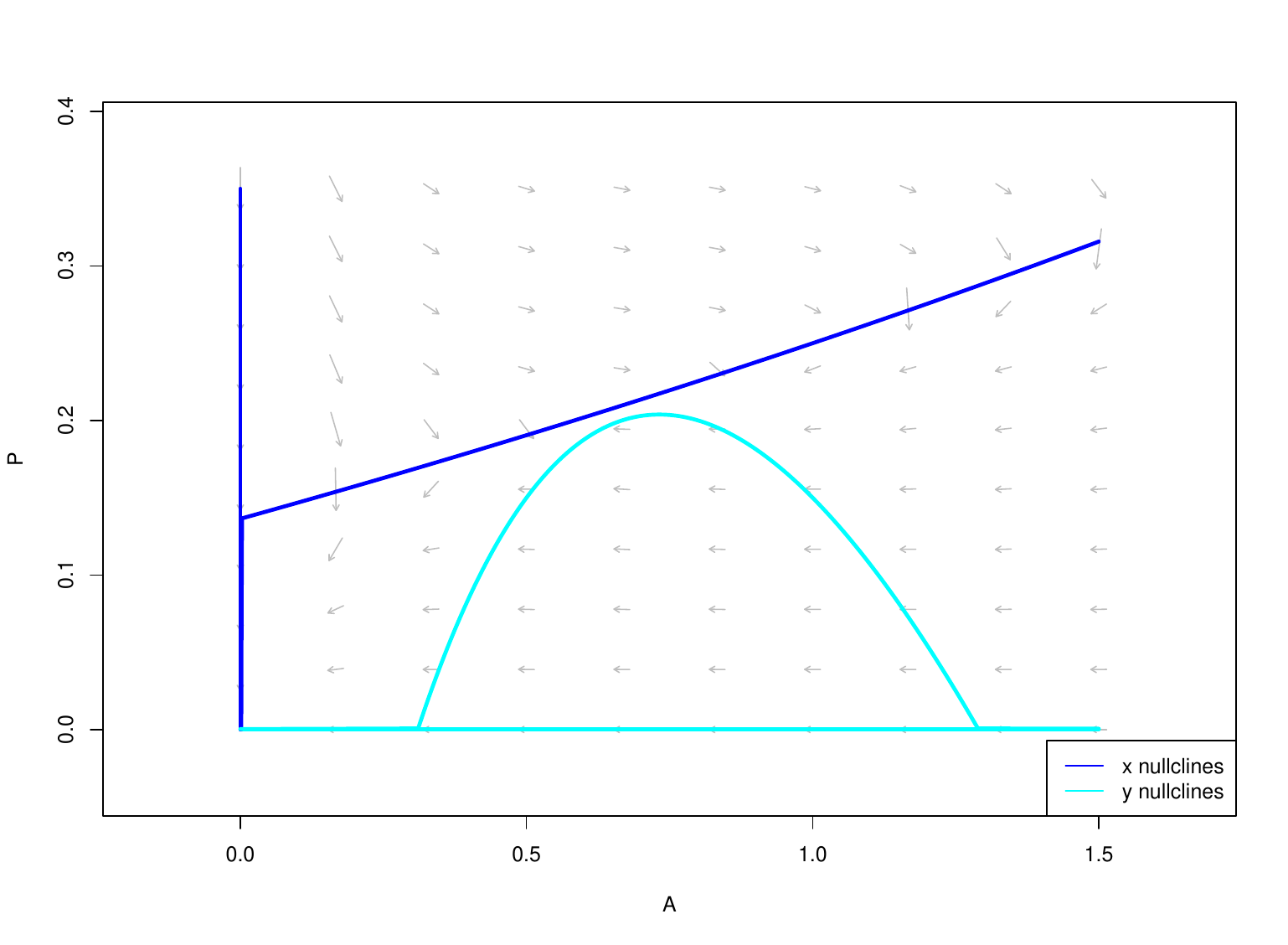}
    \end{minipage}
    \caption{Phase plan and nullclines of the system of ODE \eqref{eq_1species}: nullclines for the pollinators dynamics in blue; nullclines for the plants dynamics in cyan. For the plots, the parameters are $\alpha_A=25$, $\alpha_P=9$, $\beta_A=\beta_P=\gamma_A=\gamma_P=1$ and $\delta_P=3$, and some various $d_A$ and $d_P$. Right: $d_A=2$ and $d_P=1$, the system has $3$ stationary states: the null equilibrium, $1$ stable positive equilibrium and $1$ unstable positive equilibrium. Left: $d_A=3$ and $d_P=1.2$, the unique equilibrium of the system is the null equilibrium. }
    \label{fig:phaseplan}
\end{figure}

Precisely, we prove the following lemma that gives the number of equilibria and their stability. 

\begin{lemma}\label{lem_equicp}
In addition to the null-equilibrium, System~\eqref{eq_1species} has $0$, $1$ or $2$ equilibria, depending on the values of the parameters.\\
Precisely, denote by $C_0^-$ and $C_0^+$ the two zeros of $g^P$. System~\eqref{eq_1species} has $2$ positive equilibrium if and only if $$\max_{x\in (C^-_0/c,C^+_0/c)}f(x)>0.$$ 
In this case, the equilibria are $({P}^-_\infty,{A}^-_\infty)$ and $({P}^+_\infty,{A}^+_\infty)$ with ${A}^-_\infty<{A}^+_\infty$ and they are the two positive solutions to
\begin{equation}\label{eq_equilibria}
\left\{
\begin{aligned}
&{P}_\infty=\frac{1}{k}g^P(c{A}_\infty)\\
&{A}_\infty=\frac{1}{h}g^A\left(\frac{c}{k}g^P(c{A}_\infty)\right), \text{ and } c{A}_\infty\in (C_0^-,C_0^+).
\end{aligned}
\right.
\end{equation}
$({P}^-_\infty,{A}^-_\infty)$ is an unstable positive equilibrium and $({P}^+_\infty,{A}^+_\infty)$ is a stable positive equilibrium.
\end{lemma}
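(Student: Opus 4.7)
The plan is to reduce the stationary system to a single scalar equation in $A_\infty$, count its solutions by a concavity argument, and then read off stability from the $2\times 2$ Jacobian. At any positive stationary state of \eqref{eq_1species}, the first line forces $P_\infty = g^P(cA_\infty)/k$, which in turn requires $cA_\infty \in (C_0^-, C_0^+)$, the positivity interval of $g^P$ on $\R_+$. Indeed $g^P$ is strictly concave (sum of the concave Hill term $\alpha_P R/(\beta_P + \gamma_P R)$ and the affine map $-d_P - \delta_P R$) with $g^P(0) = -d_P < 0$ and $g^P(R)\to -\infty$ as $R\to \infty$; Assumption~\ref{hyp:CNequilibre} supplies a strictly positive value, so $g^P$ has exactly two positive zeros $0 < C_0^- < C_0^+$. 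Substituting $P_\infty$ into the second stationarity condition, positive equilibria correspond bijectively, through the formulas in \eqref{eq_equilibria}, to the zeros of
$$
f(x) := g^A\!\left(\tfrac{c}{k}\, g^P(cx)\right) - h x
$$
on the open interval $(C_0^-/c, C_0^+/c)$.

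I would next show that $f$ is strictly concave on this interval. The inner map $x\mapsto c g^P(cx)/k$ is strictly concave, and $g^A$ is concave and strictly increasing on $\R_+$ (direct check on the Hill form: $(g^A)' > 0$ and $(g^A)'' < 0$); the chain-rule identity $(g^A\circ u)'' = (g^A)''(u)(u')^2 + (g^A)'(u)\,u''$ then yields that $g^A\bigl(cg^P(c\cdot)/k\bigr)$ has a strictly negative second derivative, and subtracting the linear term $hx$ preserves strict concavity. At both endpoints $g^P$ vanishes, so $f(C_0^\pm/c) = -d_A - h C_0^\pm/c < 0$. A strictly concave function that is negative at both ends of an interval has either $0$, $1$ (tangential case), or exactly $2$ zeros, according to whether $\max f$ is $<0$, $=0$, or $>0$: this is precisely the characterization claimed in the lemma, with two roots $A_\infty^- < A_\infty^+$ in the last case.

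For stability, using $g^P(cA_\infty) = kP_\infty$ and $g^A(cP_\infty) = hA_\infty$ to simplify, the Jacobian of the right-hand side of \eqref{eq_1species} at a positive equilibrium is
$$
J = \begin{pmatrix} -kP_\infty & cP_\infty\,(g^P)'(cA_\infty) \\ cA_\infty\,(g^A)'(cP_\infty) & -hA_\infty \end{pmatrix},
$$
with trace $-kP_\infty - hA_\infty < 0$. Introducing $\phi_A(x) := g^A(cg^P(cx)/k)/h$ and observing that at equilibrium $cg^P(cA_\infty)/k = cP_\infty$, a direct calculation rewrites the determinant as $\det J = kh\,P_\infty A_\infty \bigl(1 - \phi_A'(A_\infty)\bigr)$. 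Now $\phi_A$ is strictly concave, and since $\phi_A(x) - x = f(x)/h$ vanishes at $A_\infty^-$ and $A_\infty^+$ with strictly positive values in between, one must have $\phi_A'(A_\infty^-) > 1 > \phi_A'(A_\infty^+)$. Hence $\det J < 0$ at $(P_\infty^-, A_\infty^-)$, giving eigenvalues of opposite signs and thus a saddle (unstable), while $\det J > 0$ together with negative trace at $(P_\infty^+, A_\infty^+)$ forces both eigenvalues to have negative real part, yielding local asymptotic stability. The one nontrivial point in the whole argument is the concavity of $g^A\circ g^P$, which crucially uses the monotonicity of $g^A$ in addition to the concavity of both rates; once this is granted, the equilibrium count and the stability dichotomy both come from the same geometric picture.
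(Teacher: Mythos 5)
Your proof is correct and follows essentially the same route as the paper: reduction to the scalar equation $f=0$ on $(C_0^-/c,\,C_0^+/c)$, strict concavity of $f$ via the chain rule (crucially using the monotonicity of $g^A$ together with the concavity of both rates), and stability read off from the sign of $f'$ at the two roots through the determinant of the $2\times 2$ Jacobian. If anything your version is slightly more careful than the paper's: you evaluate $f$ at the endpoints as the finite negative numbers $-d_A-hC_0^\pm/c$ (the paper loosely asserts $f\to-\infty$ there), and your signs $\det J<0$ at $A_\infty^-$ and $\det J>0$ at $A_\infty^+$ are the ones consistent with the paper's own determinant formula and with the stated conclusion, whereas the paper's text momentarily swaps them.
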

To conclude on the dynamics of the trajectories of System~\eqref{eq_1species}, we performed simulations. 
In summary, it appeared that, depending on the parameters values, either all trajectories are attracted by the null equilibrium (Fig~\ref{fig:phaseplan}(left)), or trajectories converge to $0$ or some positive equilibrium, depending on the initial conditions (Fig~\ref{fig:phaseplan}(right)). We never observed cycles. Moreover, the competitive terms ensure that trajectories remain bounded.

\begin{proof}[Proof of Lemma \ref{lem_equicp}]
Any stationary point $({P}_\infty,{A}_\infty)$ is solution to 
\begin{equation}
\begin{aligned}
    g^P\big(c {A}_\infty\big)=k{P}_\infty & \quad \text{ or }\quad {P}_\infty=0 \\
    g^A\big(c {P}_\infty\big)=h{A}_\infty &\quad\text{ or }\quad {A}_\infty=0. \label{eq_ss1species}
\end{aligned}
\end{equation}
Recall that either both or no coordinates are null. 
In view of \eqref{eq_ss1species}, the existence of a positive equilibrium requires at least that $\sup_{x\in\R^+} g^P(x)>0$ and $\sup_{y\in \R^+}g^A(y)>0$. This is true under Assumption \ref{hyp:CNequilibre}. Recall that $C_0^-$ and $C_0^+$ are the two zeros of $g^P$. From~\eqref{eq_ss1species}, we deduce that, if there exists a positive stationary state, it satisfies System~\eqref{eq_equilibria}.

It remains to find the number of zeros of $f(x)=\frac{1}{h}g^A\left(\frac{c}{k}g^P(cx)\right)-x$ on $\left(\frac{C^-_0}{c},\frac{C^+_0}{c}\right)$. Since $g^A(c/k \times \cdot)$ and $g^P(c \times \cdot)$ are concave functions with $g^A$ non-decreasing, their composition is also concave and so is $f$. Since $f(C_0^\pm/c)=-d_A/h-C_0^\pm/c<0$, it admits
\begin{itemize}
    \item $2$ zeros if $\max_{x\in (C^-_0/c,C^+_0/c)}f(x)>0$, and System~\eqref{eq_ss1species} has $2$ positive equilibria,
    \item $1$ zero if $\max_{x\in (C^-_0/c,C^+_0/c)}f(x)=0$, and System~\eqref{eq_ss1species} has $1$ positive equilibrium,
    \item no zero otherwise, and System~\eqref{eq_ss1species} has no positive equilibrium.
\end{itemize}
Finally, in the case of $2$ positive equilibria, which will be denoted by $({P}^-_\infty,{A}^-_\infty)$ and $({P}^+_\infty,{A}^+_\infty)$ with ${A}^-_\infty<{A}^+_\infty$, the Jacobian matrix can be computed in order to study their stability (see the Hartman–Grobman theorem):
$$
J^\pm=\begin{pmatrix}
-k {P}^\pm_{\infty} & c{P}^\pm_{\infty}(g^P)'(c{A}^\pm_{\infty})\\
c{A}^\pm_{\infty}(g^A)'(c{P}^\pm_{\infty}) & -h {A}^\pm_{\infty}
\end{pmatrix}.
$$
The trace is negative, thus the stability of the stationary states depend on the sign of the determinant.
$$
\det(J^\pm)={P}^\pm_{\infty}{A}^\pm_{\infty}\Big(hk-c^2(g^P)'(c{A}^\pm_{\infty})(g^A)'(c{P}^\pm_{\infty})\Big)=-{P}^\pm_{\infty}{A}^\pm_{\infty}hkf'({A}^\pm_{\infty}),
$$
and its sign depends only on the sign of $f'({A}^\pm_{\infty})$.
According to the previous study of the function $f$, we deduce that $\det(J^+)$ is negative, and $\det(J^-)$ is positive. In other words, System~\eqref{eq_ss1species} admits $1$ stable positive equilibrium and $1$ unstable positive equilibrium.\\
In the case when System~\eqref{eq_ss1species} admits a unique positive equilibrium, the same study implies that this equilibrium is a non-hyperbolic equilibrium.
\end{proof}

\subsubsection{Behaviour of the kinetic equations \eqref{edp:cinet}}


Recall that we are still working with the growth rates defined in \eqref{growth-rates}. \\

The stationary solutions of \eqref{edp:cinet} are couples of measures $ \bar{P}^\infty(dx)$ and $ \bar{A}^\infty(dy)$ in $\mathcal{M}_{F}([0,1])$ such that, for any positive, bounded and continuous function $f$ on $[0,1]$:
  \begin{align}
0= &\int_0^1f(x)  \Big[ g^P\Big(\int_0^1 \psi(x,y)\bar{A}^\infty(dy)\Big) - k\star \bar{P}^\infty(x)\Big] \bar{P}^\infty(dx), \label{sol:stationnaire-edp1} \\
0= &  \int_0^1f(y) \Big[g^A\Big(\int_0^1 \psi(x,y)\bar{P}^\infty(dx)\Big)-h\star\bar{A}^\infty(y) \Big]\bar{A}^\infty(dy)\label{sol:stationnaire-edp2}
\end{align}
where \begin{equation}\label{def:psi}
\psi(x,y):=c(x,y)\phi(x,y).\end{equation}

The null measures constitute a trivial solution to \eqref{sol:stationnaire-edp1}-\eqref{sol:stationnaire-edp2}, which is stable in our particular case~\eqref{growth-rates}. Let us discuss non-zero solutions.

\begin{proposition}\label{prop_stst}
Assume that 
\begin{itemize}
    \item the competitive kernels $k$ and $h$ are constant functions;
    \item for all $x_0,y_0\in [0,1]$, $y\mapsto \psi(x_0,y)$ and $x\mapsto \psi(x,y_0)$ are increasing and continuous functions;
\end{itemize} 
then, System~\eqref{edp:cinet} does not admit any non-null stationary state with densities w.r.t Lebesgue measure.\\
Moreover, any non-null stationary state in $L^1([0,1]^2)$ is a couple of measures $(\bar P^\infty,\bar A^\infty)$ such that
\begin{equation}\label{form_stst}\begin{aligned}
\exists \bar{a}_0,& \bar{p}_1\in\R^*_+, \bar{p}_2\in \R_+, \bar x_1,\bar x_2,\bar y_0\in [0,1],\  \left\{\begin{aligned}&\bar{P}^\infty=\bar{p}_1\delta_{\bar x_1}+\bar{p}_2\delta_{\bar x_2}\\
&\bar{A}^\infty=\bar{a}_0\delta_{\bar y_0}
\end{aligned}\right. \\
&\text{ with } \left\{ \begin{aligned} &g^P\Big(\bar{a}_0 \psi(\bar x_1,\bar y_0)\Big)=g^P\Big(\bar{a}_0 \psi(\bar x_2,\bar y_0)\Big)=k(\bar{p}_1+\bar{p}_2)\\
&g^A\Big(\bar{p}_1 \psi(\bar x_1,\bar y_0)+\bar{p}_2 \psi(\bar x_2,\bar y_0)\Big)=h\bar{a}_0.
\end{aligned}\right.
\end{aligned}
\end{equation}
All these stationary states are unstable, except the state
\begin{equation}\label{stablest}
\left\{\begin{aligned}&\bar{P}^\infty=\frac{\max_{\R^+}g^P}{k}\ \delta_{x_0}\\
&\bar{A}^\infty=\frac{ \arg\max_{\R^+}g^P}{\psi(x_0,1)}\ \delta_{1}
\end{aligned}\right. \\
\end{equation}
if $x_0$, solution to
\begin{equation}\label{hyp_stable}
g^A\left(\frac{\max_{\R^+}g^P}{k}\psi(x_0,1)\right)\psi(x_0,1)=h\cdot \arg\max_{\R^+}g^P,
\end{equation}
exists and is unique.\\
Finally, assuming that, for all initial conditions with positive densities w.r.t Lebesgue measure, the quantities $\int_0^1\psi(x,.)\bar p_t(x)dx$, $\int_0^1\bar p_t(x)dx$, $\int_0^1\psi(.,y)\bar a_t(y)dy$ and  $\int_0^1\bar a_t(y)dy$ converge when $t$ grow to infinity, then the trajectory converges to equilibrium \eqref{stablest}.
\end{proposition}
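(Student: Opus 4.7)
The plan is to combine the monotonicity of $\psi$ with the shapes of the two growth rates (strict concavity of $g^P$ with unique interior maximum, strict monotonicity of $g^A$) to constrain the supports of any stationary measures, then perform a trait-by-trait invasion analysis to isolate the candidate \eqref{stablest}, and finally use the exponential representation of solutions together with the assumed convergence of integrals to conclude on the long-time behaviour.

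Since $k$ and $h$ are constants, the stationary equations \eqref{sol:stationnaire-edp1}-\eqref{sol:stationnaire-edp2} reduce, on the respective supports, to $g^P(R^A(x))=k\,\bar P^\infty([0,1])$ and $g^A(R^P(y))=h\,\bar A^\infty([0,1])$, where $R^A(x):=\int_0^1\psi(x,y)d\bar A^\infty(y)$ and $R^P(y):=\int_0^1\psi(x,y)d\bar P^\infty(x)$. The measure $\bar A^\infty$ cannot be zero, otherwise $g^P(0)=-d_P<0$ would match a non-negative quantity; then, by the monotonicity of $\psi$ in each variable, $R^A$ and $R^P$ are strictly monotonic and continuous. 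A direct inspection of \eqref{growth-rates} shows $g^P$ is strictly concave with unique maximum, so each of its level sets contains at most two points, which forces $\mathrm{supp}(\bar P^\infty)$ to consist of at most two elements $\bar x_1,\bar x_2$. Since $g^A$ is strictly increasing, the same argument forces $\mathrm{supp}(\bar A^\infty)=\{\bar y_0\}$. Plugging these atomic measures back into the stationary equations yields \eqref{form_stst} and simultaneously rules out any absolutely continuous stationary state, since a density cannot be supported on finitely many points.

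An invasion analysis then identifies which of these equilibria can be stable. For a pollinator of trait $y'>\bar y_0$ not in the support, the strict monotonicity of $\psi(x,\cdot)$ and of $g^A$ give $g^A(R^P(y'))-h\bar a_0>0$, so any state with $\bar y_0<1$ is invadable; hence stability requires $\bar y_0=1$. If $\bar p_2>0$, the strict concavity of $g^P$ together with the strict monotonicity of $x\mapsto\bar a_0\psi(x,1)$ imply that any $x'\in(\bar x_1,\bar x_2)$ satisfies $g^P(\bar a_0\psi(x',1))>k\,\bar P^\infty([0,1])$, so the two-atom configuration is unstable. For a single-atom state $\bar p_1\delta_{x_0}$, non-invasion of neighbouring traits requires $x\mapsto g^P(\bar a_0\psi(x,1))$ to attain its maximum at $x_0$; because $\psi(\cdot,1)$ is strictly monotone, this maximum must coincide with the unique argmax of $g^P$, giving $\bar a_0\psi(x_0,1)=\arg\max_{\R_+}g^P$ and $k\bar p_1=\max_{\R_+}g^P$. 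Substituting into the $g^A$-equation of \eqref{form_stst} produces exactly \eqref{hyp_stable} and the state \eqref{stablest}.

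For the convergence statement, with positive initial densities the mild formulation
\[
\bar p_t(x)=\bar p_0(x)\exp\!\left(\int_0^t\!\left[g^P\!\left(\int_0^1\psi(x,y)\bar a_s(y)dy\right)-k\int_0^1\bar p_s(z)dz\right]ds\right),
\]
and its analogue for $\bar a_t$, keeps both densities strictly positive on any finite interval. The assumption gives pointwise limits $\ell^A(x),L^P,\ell^P(y),L^A$ of the four integrals, so the bracketed integrand above converges pointwise to $F^P(x):=g^P(\ell^A(x))-kL^P$ (and analogously to $F^A(y)$); combined with the boundedness inherited from competition, standard tightness arguments produce weak subsequential limits. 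Any such limit is a stationary state and hence, by the first two steps, has the form \eqref{form_stst}; strict positivity of the densities at all finite times rules out convergence to the invadable configurations identified above, since the invadable direction would force exponential growth of some integral and contradict the assumed convergence. Only \eqref{stablest} remains, and the concavity-based analysis of $F^P$ (negative off $x_0$, vanishing at $x_0$) together with the analogous fact for $F^A$ at $y=1$ forces concentration of $\bar p_t$ on $x_0$ and of $\bar a_t$ on $1$. The main obstacle is precisely this last passage: turning convergence of four scalar integrals into concentration of the full measures, and ruling out that the trajectory lingers near any of the unstable atomic equilibria of \eqref{form_stst}, is the delicate technical point of the proof.
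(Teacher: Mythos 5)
Your proposal is correct and follows essentially the same route as the paper's proof: the constant kernels reduce the stationary equations to level-set conditions on the supports, the monotonicity of $\psi$ combined with the unimodality of $g^P$ and monotonicity of $g^A$ forces at most two plant atoms and one pollinator atom (hence no densities), and instability of every configuration other than \eqref{stablest} follows from the sign of the per-capita growth rate $g^P(\int\psi(x,y)d\bar A^\infty(y))-k\bar P^\infty([0,1])$ off the support, which is exactly the contradiction the paper derives via $\frac{d}{dt}\log\bar p_t(x)$. Your closing remark correctly identifies the passage from convergence of the four scalar integrals to concentration of the measures as the delicate step; the paper is no more detailed there, simply invoking the same growth-rate argument once those convergences are assumed.
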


In the proof of Proposition~\ref{prop_stst}, we will see that the result is not restricted to the specified forms of $g^P$ and $g^A$ given in \eqref{growth-rates}. The proposition is still true as soon as shapes (successions of increases and decreases) of $g^P$ and $g^A$ are the same as the ones of the functions specified in \eqref{growth-rates}. \\
We will also see that in general, System~\eqref{edp:cinet} has no stationary state with densities, all stationary states will be composed of Dirac measures. Moreover, the maximal number of Dirac measures found in such a stationary state corresponds to the maximal number of points that can be found in the inverse image of a positive real number for the functions $g^P$ and $g^A$. The stability of these stationary states can then be deduced using the same kind of arguments as the ones in the following proof.\\

Proposition \ref{prop_stst} shows that the only possible stable equilibrium is composed of only one plant species and one pollinator species. Simulations below will illustrate this Proposition. 
This result shows that when the plant-pollinator network is nested and the competition among plants and among pollinators is constant, then the plant-pollinator community collapses to a single plant-pollinator species pair. At the stationary state, the value of the trait for the pollinators is 1, where the probability of interactions with plant is maximized. The trait for plants is <1 \eqref{hyp_stable}, because of the trade-offs for plants due to the cost of the interactions. This is in line with the numerical analysis of a system of ODEs by \cite{lever2014sudden}. Our results is a formal demonstration of this necessary collapse. It is also more general since we show that it does not depend on the specific form of $g^P$ and $g^A$. This raises the question of whether it is possible, in this type of ecological model, to maintain a stable coexistence of several plant and pollinator species in a single community by modifying the structure of the interaction graph (\textit{i.e.} with other assumptions for the function $\psi$), or the structure of the competition graph (\textit{i.e.} with non-constant competition functions $k$ and $h$).

\begin{proof}
Any stationary state $(\bar P^\infty,\bar A^\infty)$ in $L^1([0,1]^2)$ satisfies that 
\begin{equation}
    \left\{
    \begin{aligned}
    g^P\Big(\int_0^1 \psi(x,y)\bar{A}^\infty(dy)\Big) = \int_0^1 k \bar{P}^\infty(dx'), \text{ for all } x\in \text{supp}\bar P^\infty, \\
    g^A\Big(\int_0^1 \psi(x,y)\bar{P}^\infty(dx)\Big) = \int_0^1 h \bar{A}^\infty(dy'), \text{ for all } y\in \text{supp}\bar A^\infty
    \end{aligned}
    \right.
\end{equation}
Since $(\bar P^\infty,\bar A^\infty)\in L^1([0,1]^2)$, $\int_0^1 k \bar{P}^\infty(dx')$ and $\int_0^1 h \bar{A}^\infty(dy')$ are finite. Then, according to the continuity and the variations of the functions ($\psi$ is increasing w.r.t each variable, $g^A$ is increasing and $g^P$ is increasing then decreasing), we deduce \eqref{form_stst}.\\
Let us denote by $\mathcal{S}$ the set of stationary states given by \eqref{form_stst} excluding equilibrium~\eqref{stablest}.
Let us prove that all states in $\mathcal{S}$ are unstable. To this aim, we develop an argument by contradiction, assuming that it is not true and that there exists at least one stationary state $(\bar P^\infty,\bar A^\infty)\in \mathcal{S}$. We deal with the case where $\bar x_1<x_0<\bar x_2$, $\bar y_0$ being any value in $[0,1]$. Other cases, (2) $\bar x_2<x_0<\bar x_1$, $\bar y_0\in [0,1]$ or (3) $\bar x_1=\bar x_2=x_0$, $\bar y_0<1$, can be treated similarly.\\
Let us consider an initial state that is close (in Wasserstein distance) to $(\bar P^\infty,\bar A^\infty)$ and that has positive densities w.r.t Lebesgue measures.
Thus, $p_t$ exists and is positive for all $t\geq 0$ and we can study $\log(\bar p_t(x))$ for all $t\in \R_+$ and all $x\in [0,1]$:
$$
\frac{d}{dt}\log(\bar p_t(x))= g^P\Big(\int_0^1 \psi(x,y)\bar a_t(y)dy\Big) - \int_0^1 k , \bar p_0 \,t(x')dx'.
$$
Under our assumption, $\mathcal{S}$ admits at least a stable state and since the initial state is close to this set, we deduce that $(\bar p_t,\bar a_t)$ will converge to some $(\hat P^\infty,\hat A^\infty)\in \mathcal{S}$, which is close to $(\bar P^\infty,\bar A^\infty)$ (if not it), sufficiently close to have a similar form. Then for all $x\in [0,1]$,
$$
\begin{aligned}
\frac{d}{dt}\log(\bar p_t(x))
\underset{t\to\infty}{\longrightarrow}& g^P\Big(\int_0^1 \psi(x,y)\hat a^\infty(y)dy\Big) - \int_0^1 k\hat p^\infty(x')dx'\\
&= g^P\Big(\int_0^1 \psi(x,y)\hat a^\infty(y)dy\Big)-g^P\Big(\int_0^1 \psi(\hat x_1,y)\hat a^\infty(y)dy\Big).
\end{aligned}
$$
The latter quantity is positive as soon as $x\in (\hat x_1,x_0)$, which contradicts the fact that $\bar p_t(x)$ converges to $0$ for all $x\not\in \{\hat x_1,\hat x_2\}$.\\
The last point of Proposition~\eqref{prop_stst} can be obtained using the same argument, once the convergences of the four quantities detailed in the statement are assumed.
\end{proof}

Let us illustrate numerically Proposition~\ref{prop_stst} (see Figure \ref{fig:equilibriaAP1000}). To do so, we have discretized the continuous model \eqref{edp:cinetStrong}. Let $N$ be a positive integer and let $(p_i(t))_i$, resp. $(a_j(t))_j$ be approximations of $(\bar p_t(x_i))_i$, resp. $(\bar a(y_j))_j$, at $x_i= i/N $ and $y_j= j/N $, $0\leq i,j \leq N$. The solution of \eqref{edp:cinetStrong} can be approximated using the rectangular rule by the following system of coupled ODEs:
\begin{equation}\label{EDPapprox}
    \begin{aligned}
    \frac{d p_i}{d t} = \left[ g^P \left ( \frac{1}{N} \sum_{j=0}^N c_{ij} a_j \right) - \frac{1}{N}\sum_{j=0}^N k_{i,j} p_j   \right] p_i, \\
    \frac{d a_j}{d t} = \left[ g^A \left ( \frac{1}{N} \sum_{i=0}^N c_{ij} p_i \right) - \frac{1}{N}\sum_{i=0}^N h_{i,j} a_i   \right] a_j,
    \end{aligned}
\end{equation}
where the initial data has been defined as $p_i(0) = \bar{p}_0(x_i)$, $p_i(0) = \bar{p}_0(x_i)$,  $a_j(0) = \bar{a}_0(x_j)$. We recognize the ODE system \eqref{edo} obtained when the numbers of plant and pollinator species are finite with $n=m=N$.

We shall consider the case of growth functions \eqref{growth-rates}. In order to fit the hypotheses of Proposition \ref{prop_stst} and ensure the convergence towards an explicit equilibrium of the dynamics, the interaction matrix $(c_{ij})_{ij}$ is non-decreasing in each index $i$ and $j$, and the competition kernels will be chosen constant. We consider the simple case \[c_{ij}=\frac{(i+1)(j+1)}{2N^2}.\]

We represent in Figure \ref{fig:equilibriaAP1000} (in logarithmic scale) the convergence towards the equilibrium state reached when no evolution occurs anymore on the discrete dynamics, for a random initial datum supported in $[0,1]$. We checked numerically the conclusion of Proposition \ref{prop_stst}: the system converges towards an equilibrium which consists of a Dirac delta centered inside the domain for the plants, and on the right border for the pollinators. The last remaining plant species is moderately specialist ($x \simeq 0.6$) whereas the remaining pollinator is genuinely generalist ($y = 1$).

\begin{figure}
    \centering
    \includegraphics[scale=.32]{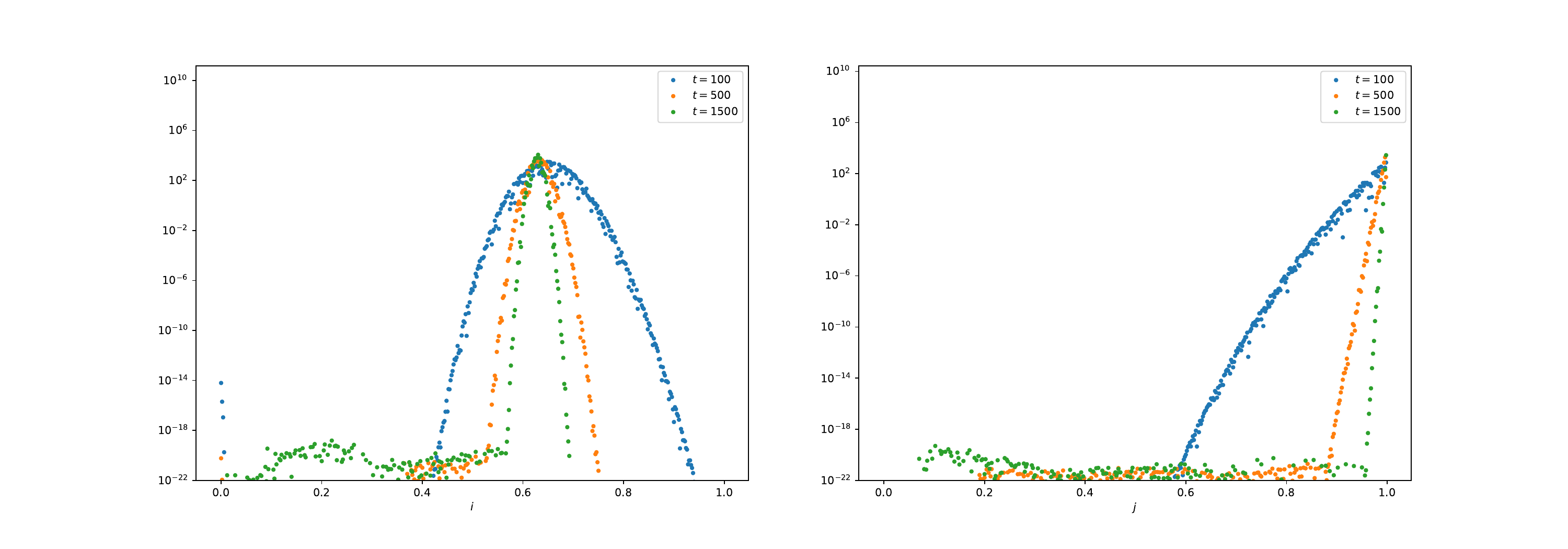}
    \caption{Solutions to \eqref{EDPapprox} at time $100$ (blue), $500$ (orange) and $1500$ (green). The left panel represents the distribution of plant species and the right panel the distribution of pollinator species. Parameters are $\alpha_A=3$, $\alpha_P=25$, $\beta_A=\beta_P=\gamma_P=1$, $\gamma_A=0.3$, $d_P=1$, $d_A=3$ and $\delta_P=3$, with $N=500 $species of plants and pollinators.}
    \label{fig:equilibriaAP1000}
\end{figure}

\section{Conclusion}

Two of the main goals of Ecology are i) to explain how emerged the size, composition and structure of communities, for example plant-pollinator communities, and ii) predict the dynamics and stability of a given community. Such questions involve different hierarchical scales, from the individuals which effectively interact (the microscopic scale), to the species and the whole community (the macroscopic scale). 

Theoretical Ecology mostly addresses these questions by studying systems of ODEs where each equation refers to a given species. This approach has several methodological and conceptual drawbacks. First, the system is discretized and structured by species, which precludes the possibility of within-species variability regarding the rate and intensity of interactions, in particular between-species overlaps. Second, the interaction graphs underlying the system of ODEs are most often arbitrarily given without specified mechanisms. Finally, due to the high-dimensionality of the ODEs system, it is difficult to obtain general properties of communities.

Here, we modelled an individual-based  plant-pollinator network, where interactions are structured by an individual trait. We found continuous limits of the microscopic system and finally showed that it could be approximated by PDEs. We finally studied the dynamics, stationary state and stability of these continuous limits.
Our approach allows to address several limits exposed before: the relationships between the individuals, species and community scales are explicit; interactions variability within and between-species is taken into account; the interaction graph is based on individuals' traits; PDEs approximations allow an explicit and analytical study of the property of a community. We showed in particular that a nested plant-pollinator network is expected to collapse, a phenomenon already observed by previous works, but to our knowledge for the first time formally demonstrated under general conditions. Our approach can thus provide a new and original theoretical framework for ecologists to address long-standing questions. 

One of the actual limit of our model is that  interactions take place through a ``mass-action model", in particular with the use of the resources \eqref{Rij:model2}-\eqref{Rij:model3} which are already at a macroscopic scale. Returning to an event-based modelling for establishing these functional responses (e.g. \cite{bansayebilliardchazottes}) and showing how interactions at the individual level would translate into an interaction graph and into the dynamics of the whole community is an open question.

\section*{Code}

The code for simulations is available here: \\
\verb"https://gitlab.com/thoma.rey/PlantPollinatorsNetwork/"

\appendix

\section{Proofs}

\subsection{Bounds on the microscopic representation}\label{app_proof1}

Here, we give a sketch of proof of Proposition~\ref{prop:stochpro}. The proof follows from usual stochastic calculus with Poisson point processes, as developed in \cite{fourniermeleard} for example.\\

\begin{proof}[Proof of Proposition~\ref{prop:stochpro}] The proof is divided into several steps.\\

\noindent \textbf{Step 1}: let us first prove some moment estimates, including \eqref{propagation:moment}. First, notice that the process can be stochastically bounded by a process $(\tilde{\mathbf{P}}^{K,n}_t,\tilde{\mathbf{A}}^{K,m}_t)$ where only births occur at the maximal birth rate, denoted by $M^P:=\sup_{r\in \R^+}b^P(r)$:
\begin{align*}
    \langle \mathbf{P}^{K,n}_t,\one\rangle &=  \frac{1}{nK}\sum_{i=1}^n P^{K,i}_t 
   \\ & \leq \langle \tilde{\mathbf{P}}^{K,n}_t,\one\rangle = \langle \mathbf{P}^{K,n}_0,\one\rangle +  \int_0^t \int_{E}  \frac{1}{nK} \sum_{i=1}^n\ind_{i=k}\ind_{\theta\leq  M^P\tilde P^{K,i}_{s-}} Q^P_B(ds,dk,d\theta).
\end{align*}
Since
\[
\E\Big(\sup_{s\leq t\wedge \tau^K_N}\langle \mathbf{P}^{K,n}_s,\one\rangle\Big)\leq 1+ \E\Big(\sup_{s\leq t\wedge \tau^K_N}\langle \mathbf{P}^{K,n}_s,\one\rangle^2\Big)\leq 1+ \E\Big(\sup_{s\leq t\wedge \tau^K_N}\langle \tilde{\mathbf{P}}^{K,n}_s,\one\rangle^2\Big),
\]
we only have to find a bound for the second moment of $\langle \tilde{\mathbf{P}}^{K,n}_s,\one\rangle$.\\
For a constant $N>0$, we introduce the stopping time 
\[\tau^K_N=\inf\Big\{ t\geq 0,\ \langle \tilde{\mathbf{P}}^{K,n}_t,\one\rangle\geq N\mbox{ or }\langle \tilde{\A}^{K,m}_t,\one\rangle \geq N \Big\}.\]
Using Itô's formula for jump processes (\textit{e.g.} \cite[Th.5.1 P.66]{ikedawatanabe}), we have:
\begin{align*}
    \langle & \tilde{\mathbf{P}}^{K,n}_t,\one\rangle^2\\ = & \langle \mathbf{P}^{K,n}_0,\one\rangle^2 +  \int_0^t \int_{E} \sum_{i=1}^n\ind_{k=i} \Big(\big(\langle \tilde{\mathbf{P}}^{K,n}_{s-},\one\rangle+\frac{1}{nK}\big)^2 - \langle \tilde{\mathbf{P}}^{K,n}_{s-},\one\rangle^2\Big) \ind_{\theta\leq  M^P \tilde{P}^{K,i}_{s-}} Q^P_B(ds,dk,d\theta)\\
    =&  \langle \mathbf{P}^{K,n}_0,\one\rangle^2 +  \int_0^t \int_{E} \sum_{i=1}^n\ind_{k=i} \Big(\frac{2}{nK} \sup_{u\leq s{-}}\langle \tilde{\mathbf{P}}^{K,n}_{u},\one\rangle + \frac{1}{n^2K^2}\Big) \ind_{\theta\leq  M^P \tilde{P}^{K,i}_{s-}} Q^P_B(ds,dk,d\theta).
\end{align*}
Since the right hand side is an increasing process, we can replace the left hand side by $\sup_{s\leq t \wedge \tau_N^K}\langle \tilde{\mathbf{P}}^{K,n}_s,\one\rangle^2$. Then, taking the expectation gives:
\begin{align*}
    \E\Big(\sup_{s\leq t\wedge \tau^K_N}\langle \tilde{\mathbf{P}}^{K,n}_s,\one\rangle^2\Big)  \leq & \E\big(\langle \mathbf{P}^{K,n}_0,\one\rangle^2\big) + \E\Big[ \int_0^{t\wedge \tau^K_N} \sum_{i=1}^n  \Big(\frac{2}{nK} \sup_{u\leq s}\langle \tilde{\mathbf{P}}^{K,n}_{u},\one\rangle + \frac{1}{n^2K^2}\Big) M^P \tilde{P}^{K,i}_s \ ds \Big]\\
    \leq & \E\big(\langle \mathbf{P}^{K,n}_0,\one\rangle^2\big) + \int_0^t M^P\Big[ (2+ \frac{1}{nK})  \E \Big(1+\sup_{u\leq s\wedge \tau^K_N}\langle \tilde{\mathbf{P}}^{K,n}_{u},\one\rangle^2 \Big) \Big]  \ ds.
    \end{align*}
Using Gronwall's lemma, we deduce
\begin{align*}
     \E\Big(\sup_{s\leq t\wedge \tau^K_N}\langle \tilde{\mathbf{P}}^{K,n}_s,\one\rangle^2\Big) \leq & \Big[\E\big(\langle \mathbf{P}^{K,n}_0,\one\rangle^2\big)+ 3M^P\Big] e^{3M^P t} <+\infty.
    \end{align*}
    Since the bound does not depend on $N$, we make $N$ grow to $\infty$ and obtain
    \begin{align*}
     \E\Big(\sup_{s\leq t}\langle \tilde{\mathbf{P}}^{K,n}_s,\one\rangle^2\Big) \leq & \Big[\E\big(\langle \mathbf{P}^{K,n}_0,\one\rangle^2\big)+ 3M^P\Big] e^{3M^P t} <+\infty.
    \end{align*}
    Similarly, 
    \begin{equation}
           \E\Big(\sup_{s\leq t}\langle \tilde{\mathbf{A}}^{K,m}_s,\one\rangle^2\Big) \leq  \Big[\E\big(\langle \mathbf{A}^{K,m}_0,\one\rangle^2\big)+ 3M^A\Big] e^{3M^A t} <+\infty.
    \end{equation}

Notice that the upper bounds that we obtain do not depend on $K$. 
Once these estimates have been obtained, existence and uniqueness of a strong solution to the stochastic differential equation is given by \cite[Th. IV.9.1]{ikedawatanabe}.\\

\noindent \textbf{Step 2:} Let us now consider a measurable real function $f$ on $[0,1]$. Using It\^o's formula for SDEs with jumps (see \textit{e.g.} \cite[p. 66-67]{ikedawatanabe}) and \eqref{def_Poisson1} provides \eqref{eq:semimartP} in which:
\begin{align}
M^{K,i}_t= &
\int_0^t \int_{E}  \ind_{i=k} \ind_{\theta\leq  b^P\big(\sum_{j\sim i}R^{A,K,ij}_{s-}\big)P^{K,i}_{s-}} Q^P_B(ds,dk,d\theta)- \int_0^t b^P\big(\sum_{j\sim i}R^{K,ij}_{s}\big)P^{K,i}_{s} ds \nonumber\\
  - & \int_0^t \int_{E}  \ind_{i=k} \ind_{\theta\leq \big[d^P\big(\sum_{j\sim i}R^{A,K,ij}_{s-}\big)-k\star \mathbf{P}^{K,n}_{s-}(x^i)\big]P^{K,i}_{s-}} Q^P_D(ds,dk,d\theta) \nonumber\\
  + &  \int_0^t \big[d^P\big(\sum_{j\sim i}R^{A,K,ij}_{s}\big)-k\star \mathbf{P}^{K,n}_s(x^i)\big] P^{K,i}_{s} ds\label{eq:MKi-martingale}
\end{align}is a square integrable martingale with bracket given by \eqref{eq:martingaleP}. A similar computation can be done for $\mathbf{A}^{K,m}$ to obtain \eqref{eq:semimartA}.
\end{proof}

\subsection{Large number of species limit}\label{app_proof}

\begin{proof}[Proof of Proposition~\ref{prop_largenbspecies}]

\noindent Recall that for $n, m\geq 1$, we have:

  \begin{equation}\label{eq_equalityproof}
      \begin{aligned}
  \int_0^1 f(x)d\widetilde\bP^{n,m}_t(x)= &   \int_0^1 f(x)d\widetilde\bP^{n,m}_0(x) \\
  &+\int_0^t \frac{1}{n}\sum_{i=1}^n f(x^i) \Big[g^P\Big( \frac{1}{m}\sum_{j=1}^m mc^{n,m}_{i,j}G_{ij} \widetilde A^j_s\Big)- \frac{1}{n} \sum_{\ell=1}^n k(x^i,x^\ell) \widetilde P^\ell_s \Big]\widetilde P^i_s ds,  \\
 \int_0^1 f(y)d{\widetilde\A}^{n,m}_t(y)= &   \int_0^1 f(y)d\widetilde\A^{n,m}_0(y)  \\
    & +\int_0^t \frac{1}{m}\sum_{j=1}^m f(y^j) \Big[g^A\Big( \frac{1}{n}\sum_{i=1}^n nc^{n,m}_{i,j}G_{ij}\widetilde P^i_s\Big) - \frac{1}{m} \sum_{\ell=1}^m h(y^j,y^\ell) \widetilde A^\ell_s \Big] \widetilde A^j_s\ ds.
   \end{aligned}
  \end{equation}
   where conditionally on $(x^i,y^j)_{i\in \lbrac 1, n\rbrac, j\in \lbrac 1,m\rbrac}$, $G_{ij}\sim Bern(\phi(x^i,y^j))$ and are independent r.v., and $c^{n,m}_{i,j}$ are independent r.v. satisfying Equations~\eqref{def:Cijnm} and \eqref{var:Cijnm}.\\

\noindent \textbf{Step 1: Processes are bounded.}
Since $g^P$ and $g^A$ are bounded from above respectively by $M_P$ and $M_A$ and since the competition terms for the pollinators are non-positive, choosing $f\equiv 1$ in the previous equation~\eqref{eq_equalityproof}, we find that a.s.,
\begin{align}\label{ineq_majinit}
\langle \widetilde\bP^{n,m}_t,1\rangle\leq \langle \widetilde\bP^{n,m}_0,1\rangle e^{M_P t}\quad \mbox{ and }\quad \langle \widetilde\A^{n,m}_t,1\rangle\leq \langle \widetilde\A^{n,m}_0,1\rangle e^{M_A t},
\end{align}
for all $n$ and $m$. Notice that these upper-bounds hold whatever the values of $G_{ij}$ and $c_{ij}^{n,m}$.
By Assumption~\eqref{hyp:boundedCI}, we can define a constant $C(T)<+\infty$ such that almost surely
\begin{equation}\label{majoration_initcond}
\sup_{n,m}\left(\langle \widetilde\bP^{n,m}_0,1\rangle e^{M_P T}+\langle \widetilde\A^{n,m}_0,1\rangle e^{M_A T}\right) < C(T).
\end{equation}
Thus, the processes $\big(\widetilde\bP^{n,m}_t(dx),\widetilde\A_t^{n,m}(dy)\big)_{t\in [0,T]}$ take a.s. their values in $(\mathcal{M}_{\leq C(T)}([0,1]))^2$ which is a compact set.\\

\noindent \textbf{Step 2: Relative compactness in $\Co\Big([0,T],\mathcal{M}_{\leq C(T)}([0,1])^2\Big)$}

Following \cite[Theorem 8.3]{billingsley99} and proof of Theorem 5.3 in \cite{fourniermeleard}, it is sufficient to prove that
\begin{itemize}
\item[(i)] $\sup_{n,m\in \N} \left( \langle \widetilde\bP^{n,m}_0,1\rangle +  \langle \widetilde\A^{n,m}_0,1\rangle \right) $ is bounded almost surely, and
\item[(ii)] for any continuous function $f$ on $[0,1]$,
and for any $\varepsilon>0$,
\begin{align*}
&\lim_{\delta \to 0} \limsup_{n,m\in\N} \mathbb{P}\left( \sup_{|t-s|\leq \delta} |\langle \widetilde\bP^{n,m}_s,f\rangle- \langle \widetilde\bP^{n,m}_t,f\rangle|\geq \varepsilon\right)=0 \\
& \lim_{\delta \to 0} \limsup_{n,m\in\N} \mathbb{P}\left( \sup_{|t-s|\leq \delta}|\langle \widetilde\A^{n,m}_s,f\rangle- \langle \widetilde\A^{n,m}_t,f\rangle|\geq \varepsilon\right)=0 .
\end{align*}
\end{itemize}
Point (i) corresponds to \eqref{hyp:boundedCI}. Let us turn to point (ii). Recall the definition of $\widetilde{R}^{A,i}_t$ in Proposition \ref{prop_lln} and notice that $\widetilde R^{A,i}_t=\frac{1}{m}\sum_{j=1}^m mc^{n,m}_{i,j}G_{ij} \widetilde A^j_t$. Then,
\begin{equation*}
\E[\widetilde R^{A,i}_t]
\leq \frac{1}{m}\sum_{j=1}^m \E[m c^{n,m}_{i,j}] \ C(T)\leq \max_{n,m\in \N^*}\|m c^{n,m}\|_{\infty} \ C(T)<\infty,
\end{equation*}
which is finite from Assumption~\ref{ass_funcc}. Thus, there exists $\tilde C>0$ such that
$\P(\widetilde R^{A,i}>\tilde C)$ is sufficiently small. 
Then $g^P$ is bounded on $[0,\tilde C]$ by some constant $\tilde M$ (since it is a Lipschitz function). Thus, on $\{\tilde R^{A,i}\leq \tilde C\}$, for any positive continuous function $f$ on $[0,1]$ and any $0\leq s\leq t\leq T$, 
\begin{align*}
|\langle \widetilde\bP^{n,m}_t,f\rangle- \langle \widetilde\bP^{n,m}_s,f\rangle| &\leq \int_s^t (\tilde M+\|k\|_{\infty}C(T))\|f\|_{\infty}C(T) du.
\end{align*}
Same computations can be done for the sequence of processes $(\widetilde\A^{n,m})_{m\in \N^*}$. And these are sufficient to conclude point (ii).\\

\noindent \textbf{Step 3: Density of the limiting values.}
Let us then prove that every limiting value of $\big(\widetilde\bP^{n,m},\widetilde\A^{n,m}\big)$, denoted here by $(\bar{\bP},\bar{\A})$, is a process whose time marginals at $t>0$ admit densities $\bar{p}_t$ and $\bar{a}_t$ with respect to the Lebesgue measure on $[0,1]$. To achieve this, we dominate the measures $\widetilde\bP^{n,m}_t(dx)$ and $\widetilde\A_t^{n,m}(dy)$ by measures with densities.
For all positive continuous function $f$ on $[0,1]$, with arguments similar to these of \eqref{ineq_majinit}, we prove that
\begin{equation*}
\begin{aligned}
& \langle \bar\bP_t,f\rangle \leq \langle \bar\bP_0,f\rangle e^{M_P t}=\int_0^1 f(x) \big(p_0(x)e^{M_P t}\big) dx,\quad \text{a.s.} \\
& \langle \bar\A_t,f\rangle \leq \langle \bar\A_0,f\rangle e^{M_A t}=\int_0^1 f(y) \big(a_0(y)e^{M_A t}\big) dy, \quad \text{a.s.} 
\end{aligned}
\end{equation*}
which is sufficient to deduce the existence of densities with respect to Lebesgue measure.\\

\noindent \textbf{Step 4: Equation satisfied by the limiting value.}
Let us fix $t>0$ and $f$ a continuous positive function on $[0,1]$. We define for all $t\in [0,1]$ the map $\psi_t$ on $\Co([0,1],\mathcal{M}_{\leq C(T)}([0,1])^2)$, endowed with the uniform convergence, defined by
\[
\psi_t(\bP,\A)=\begin{pmatrix}
\langle \bP_t,f\rangle-\langle \bP_0,f\rangle - \int_0^t\int_0^1f(x)\left[g^P\left((c\phi)\star\A_s(x)\right)-k\star\bP_s(x) \right]d\bP_s(x)ds\\[0.5em]
\langle \A_t,f\rangle-\langle \A_0,f\rangle - \int_0^t\int_0^1f(x)\left[g^A\left((c\phi)\star\bP_s(x)\right)-h\star\A_s(x) \right]d\A_s(x)ds
\end{pmatrix}.
\]
The continuity of map $\psi_t$ is straightforward once noticing that the weak convergence on $\mathcal{M}_{\leq C(T)}^2([0,1])$ is equivalent to convergence of measures in the Kantorovich–Rubinstein distance {(see for example Theorem 6.9 of \cite{villani2009optimal})}
$$
\mathcal{W}_1(\nu,\mu)=\sup\left\{\int_0^1f(x)d(\nu-\mu)(x)\Big| f:[0,1]\mapsto [0,1] \text{ Lipschitz }, Lip(f)\leq 1 \right\},
$$
since $[0,1]$ is a compact space. The function $\psi_t$ is also bounded since all measures are bounded by $C(T)$.\\
Consider a weak limiting value $(\bar{\bP},\bar{\A})$ of $\big(\widetilde\bP^{n,m},\widetilde\A^{n,m}\big)$, and denote by $\big(\widetilde\bP^{n,m},\widetilde\A^{n,m}\big)$ again (with an abuse of notation) the subsequence converging to $(\bar{\bP},\bar{\A})$. Using the Skorokhod representation theorem, we can assume that these processes are defined on the same probability space and that the convergence is almost sure:
\begin{equation}
    \lim_{n,m\rightarrow +\infty} \sup_{t\leq T} \Big(\mathcal{W}_1\big(\widetilde{\bP}_t^{n,m},\bar{\bP}_t\big)+ \mathcal{W}_1\big(\widetilde{\A}_t^{n,m},\bar{\A}_t\big)\Big)=0,\qquad \mbox{ a.s.}\label{convergence:etape1}
\end{equation}Then, by the properties of $\psi_t$,
\begin{equation}\label{limitpsi}
\E\left[\|\psi_t(\widetilde\bP^{n,m},\widetilde\A^{n,m})\|_1\right] \underset{n,m\to +\infty}{\longrightarrow} \E\left[\|\psi_t(\bar\bP,\bar\A)\|_1\right].
\end{equation}

Our purpose is now to prove that for all $t\in [0,T]$,
\begin{equation}
    \lim_{n,m\rightarrow +\infty}\E\left[\|\psi_t(\widetilde\bP^{n,m},\widetilde\A^{n,m})\|_1\right]=0.\label{eq:but-step4-dur}
\end{equation}This will ensure, in addition with \eqref{limitpsi}, that $\psi(\bar\bP,\bar\A)=(0,0)$ a.s. In other words, the limiting value satisfies Equation~\eqref{edp:cinet} a.s. \\
By {\eqref{eq_equalityproof}, \eqref{ineq_majinit} and} \eqref{majoration_initcond} and denoting by $L^P$ (resp. $L^A$) the Lipschitz {constant} of $g^P$ (resp. $g^A$):
\begin{align}   \E&\left[\|\psi_t(\widetilde\bP^{n,m},\widetilde\A^{n,m})\|_1\right]\nonumber\\
   =& \E\left[\left| \int_0^t \frac{1}{n}\sum_{i=1}^n f(x^i) \widetilde{P}^i_s \bigg(g^P\Big(\frac{1}{m}\sum_{j=1}^m G_{ij}mc^{n,m}_{i,j}\widetilde A^j_s\Big) -g^P\left((c\phi)\star\widetilde\A_s^{n,m}(x^i)\right)\bigg)ds\right|\right]\nonumber\\
   &+\E\left[\left| \int_0^t \frac{1}{m}\sum_{j=1}^m f(y^j) \widetilde{A}^j_s \bigg(g^A\Big(\frac{1}{n}\sum_{i=1}^n G_{ij}nc^{n,m}_{i,j}\widetilde P^i_s\Big) -g^A\left((c\phi)\star\widetilde\bP_s^{n,m}(y^j)\right)\bigg)ds\right|\right],\nonumber\\
   \leq & \int_0^t \frac{C(T)L^P \|f\|_\infty}{n}\sum_{i=1}^n 
   \E\left[\left|\frac{1}{m}\sum_{j=1}^m G_{ij}mc^{n,m}_{i,j}\widetilde A^j_s -(c\phi)\star\widetilde\A_s^{n,m}(x^i)\right|\right]\nonumber\\
 &  \hspace{3cm}+\int_0^t \frac{C(T)L^A \|f\|_\infty}{m}\sum_{j=1}^m 
   \E\left[\left|\frac{1}{n}\sum_{i=1}^n G_{ij}nc^{n,m}_{i,j}\widetilde P^i_s -(c\phi)\star\widetilde\bP_s^{n,m}(y^j)\right|\right]\, ds.\label{term3}
\end{align}
The two terms in \eqref{term3} having similar expressions, we focus on the first one with the aim of showing that uniformly for all $t\in [0,T]$,
\begin{equation}
\lim_{n,m\rightarrow +\infty}\E\left[\left|\frac{1}{m}\sum_{j=1}^m G_{ij}mc^{n,m}_{i,j}\widetilde A^j_t -(c\phi)\star\widetilde\A_t^{n,m}(x^i)\right| \right] =0.  \label{eq:but-step4-dur2}
\end{equation}
The proof of \eqref{eq:but-step4-dur2} is itself divided into smaller steps. To this aim, let us set $\eps>0$ and show that for $n$ and $m$ sufficiently large the term in the left hand side of \eqref{eq:but-step4-dur2} is smaller than $\eps$. The difficulty is that $(\widetilde{A}^j_t)$ are not independent from $(G_{ij})$ and $(c_{ij}^{n,m})$, except at time $t=0$.\\

\noindent \textbf{Step 4.1: \eqref{eq:but-step4-dur2} is satisfied for $t=0$.}
At time $t=0$, we have:
\begin{align}
& \E\left[\left|\frac{1}{m}\sum_{j=1}^m G_{ij}mc^{n,m}_{i,j}\widetilde A^j_0 -(c\phi)\star\widetilde\A_0^{n,m}(x^i)\right|  \right]\nonumber\\
&\hspace{3cm}
\leq
\E\left[\left| \frac{1}{m}\sum_{j=1}^m \left( G_{ij}mc^{n,m}_{i,j} - m\, c^{n,m}(x^i,y^j) \phi(x^i,y^j)\right)\widetilde{A}^j_0 \right|\right]\nonumber\\ 
& \hspace{3,2cm}
+ \E\left[\left|\frac{1}{m}\sum_{j=1}^m \phi(x^i,y^j) \widetilde{A}^j_0 \left( m\, c^{n,m}(x^i,y^j)-c(x^i,y^j)\right)\right|\right].\label{etape4-1}
\end{align}
By \eqref{majoration_initcond}, the second term in the right hand side of \eqref{etape4-1} is upper bounded by $C(T) \|\phi\|_\infty \|mc^{n,m}-c\|_{\infty}$, which converges to zero by {Assumption~\ref{ass_funcc}}.\\
Let us now consider the first term in the right hand side of \eqref{etape4-1}. Since at $t=0$ the random variables $(\widetilde{A}^j_0)$, $(G_{ij})$ and $(c_{ij}^{n,m})$ are independent, we obtain by Assumption \ref{hyp:Gij},  the assumptions of Proposition~\ref{prop_largenbspecies} and \eqref{hyp:cij} and by using Cauchy-Schwarz inequality that
\begin{multline*}
    \E\left[\left| \frac{1}{m}\sum_{j=1}^m \left( G_{ij}mc^{n,m}_{i,j} - m\, c^{n,m}(x^i,y^j) \phi(x^i,y^j)\right)\widetilde{A}^j_0 \right|\right] \\
    \leq  \frac{1}{m}  \E\left[\text{Var}^{1/2}\left( \sum_{j=1}^m \widetilde A^j_0 G_{ij} mc^{n,m}_{i,j} \, \Big|\, x^i,y^j, \widetilde{A}^j_0, \ i\in \lbrac 1,n\rbrac, j\in \lbrac 1,m \rbrac\right)\right].
\end{multline*}
Then, recalling \eqref{var:Cijnm} and the fact that $\text{Var}(XY)=\text{Var}(X)\E[Y^2]+\text{Var}(Y)\E[X]^2$ for any independent r.v. $X$ and $Y$:
\begin{align*}
\E\left[\left| \frac{1}{m}\sum_{j=1}^m \left( G_{ij}mc^{n,m}_{i,j} - m\, c^{n,m}(x^i,y^j) \phi(x^i,y^j)\right)\widetilde{A}^j_0 \right|\right]  \leq & \frac{C(T)}{m}  \E\left[\left(\sum_{j=1}^m \text{Var}_{x,y}\left(   G_{ij} mc^{n,m}_{i,j}\right)\right)^{1/2}\right]\\
\leq & \frac{C(T)}{\sqrt{m}}\left(V_{max}^{1/2}+\|mc^{n,m}\|_\infty \|\phi(1-\phi)\|_{\infty}^{1/2}\right).
\end{align*}
Thus, the expectation in the l.h.s of~\eqref{etape4-1} can be made smaller than $\varepsilon/3$ for $n,m$ larger than some $N_0$, $M_0$.
\medskip

Along the proof of Step 4.1, the independence between the random network ($G_{ij}$'s and $c_{ij}^{n,m}$'s) and the abundancies ($(\widetilde{A}^j_0)$ and $(\widetilde{P}^i_0)$'s) was a crucial ingredient, which does not hold any more for $t>0$. Hence the difficulty. We will first show that \eqref{eq:but-step4-dur} holds on a small time intervals $[0,\delta)$, for some $\delta>0$. For this, we will construct auxiliary processes $(\widehat{A},\widehat{P})$ that do not depend on the $G_{ij}$'s and $c_{ij}^{n,m}$'s. Then, for all $i\in \lbrac 1,n\rbrac$,
\begin{multline}
 \E\left[\left|\frac{1}{m}\sum_{j=1}^m G_{ij}mc^{n,m}_{i,j}\widetilde A^j_t -(c\phi)\star\widetilde\A_t^{n,m}(x^i)\right|  \right]
\leq 
\E\left[\left| \frac{1}{m}\sum_{j=1}^m  G_{ij}mc^{n,m}_{i,j} \Big(\widetilde{A}^j_t -\widehat{A}^j_t\Big) \right| \right.\\
+ \left.\left|\frac{1}{m} \sum_{j=1}^m G_{ij} m \, c_{ij}^{n,m}\widehat{A}^j_t - c\phi \star \widehat{\A}^{n,m}_t(x_i) \right|+ \left|c\phi \star \Big(\widehat{\A}^{n,m}_t - \widetilde{\A}^{n,m}_t\Big)(x_i)\right|\right]. \label{decomposition}
\end{multline}
The second term of~\eqref{decomposition} will be treated using the exact same computation as above and thus it will be upper-bounded by $\eps/3$ as soon as $n,m$ are greater than some threshold $N_0,M_0$ and if the auxiliary processes satisfy~\eqref{majoration_initcond}. Our aim is thus to find auxiliary processes such that the latter is true and such that the first and third terms of~\eqref{decomposition} are small.

Then, we will extend the result by recursion to any $t\in [0,T]$.\\

\noindent \textbf{Step 4.2: Construction of auxiliary processes $\widehat{P}^i$, $\widehat{A}^j$.}
For $n,m$ given, and $t\geq 0$, let us define, 
\begin{equation}
    \widehat{\bP}^{n,m}_t (dx)= \sum_{i=1}^n \widehat{P}^i_t \delta_{x^i},\qquad \widehat{\A}^{n,m}_t(dx)= \sum_{j=1}^m \widehat{A}^j_t \delta_{y^j},
\end{equation}
where for $i\in \lbrac 1, n\rbrac$ and $j\in \lbrac 1, m\rbrac $, the initial state of the auxiliary processes is constructed using a discretization of the limiting initial condition $(a_0(y)dy,p_0(x)dx)$: \begin{equation}\label{def_condinitiale}
\widehat{P}_0^i=n\int_{x^i}^{x^{i+1}}p_0(x)dx \quad \text{and} \quad \widehat{A}_0^j=m\int_{x^j}^{x^{j+1}}a_0(y)dy \quad \text{with }x^{n+1}=y^{m+1}=1.
\end{equation}
Then for all $t\geq 0$, we construct the $\widehat{P}^i$'s and $\widehat{A}^j$'s as the unique solutions of the following system
\begin{align*}
 &   \frac{d\widehat{P}^i_t}{dt}= \widehat{P}^i_t \left(g^P\big(c\phi \star \widehat{\A}^{n,m}_0(x^i)\big)-\frac{1}{n}\sum_{\ell =1}^n k(x^i,x^\ell)\widehat{P}^\ell_t \right)\\
 &   \frac{d\widehat{A}^j_t}{dt}= \widehat{A}^j_t \left(g^A\big(c\phi \star \widehat{\bP}^{n,m}_0(y^j)\big)-\frac{1}{m}\sum_{\ell =1}^m h(y^j,y^\ell)\widehat{A}^\ell_t\right).
\end{align*}
Notice that the processes $(\widehat{P}^i_t, t\geq 0)$ and $(\widehat{A}^j_t, t\geq 0)$ depend on {$c$, $\phi$ and on the sequences $(x^i)_{i\leq n}$ and $(y^j)_{j\leq m}$,} but not on the $G_{ij}$'s and the $c_{ij}^{n,m}$'s since their initial condition does not. \\
Moreover, proceeding as in Step 1 (to obtain~\eqref{majoration_initcond}), we can prove that for all $t\in [0,T]$,
\begin{equation}\label{majhatPA}
\sup_{n,m}( \langle \widehat{\bP}^{n,m}_t,1\rangle + \langle \widehat{\A}^{n,m}_t,1\rangle) \leq C(T),\qquad \mbox{ a.s.},
\end{equation}
hence the second term of~\eqref{decomposition} is smaller than $\eps/3$, as soon as $n,m\geq N_0,M_0$.\\

\noindent \textbf{Step 4.3: the third term of~\eqref{decomposition} is small.}
Notice that, under the assumptions on $c$ and $\phi$, there exists $C_0>0$ such that for all measures $\nu$ and $\mu$, and for all $x\in [0,1]$
\begin{equation*}
    \left|c\phi \star (\nu-\mu)(x)\right|\leq C_0 \mathcal{W}_1(\nu,\mu).
\end{equation*}
Thus, to show this step, we will prove that $\mathcal{W}_1(\widehat{\A}^{n,m}_t ,\widetilde{\A}^{n,m}_t )$ is small.
To this aim, let $f$ be a Lipschitz function such that $\|f\|_{\infty}\leq 1$ and such that its Lipschitz constant $\Lip(f)$ is also bounded by 1, $\Lip(f)\leq 1$. According to the equations satisfied by all the processes, for any $t\geq 0$, 
\begin{align*}
&\left|\langle \widehat{\A}^{n,m}_t - \widetilde{\A}^{n,m}_t, f\rangle\right|- \left|\langle \widehat{\A}^{n,m}_0 - \widetilde{\A}^{n,m}_0, f\rangle\right|\\
&\leq \int_0^t\left|\frac{1}{m}\sum_{j=1}^{m}f(y^j)\left[\widetilde{A}^j_s \left(g^A\big(c\phi \star \widehat{\bP}^{n,m}_0(y^j)\big)-g^A\Big(\frac{1}{n}\sum_{i=1}^n G_{ij} \,m\, c_{ij}^{n,m} \widetilde{P}^i_s\Big)\right)  \right.\right.\\
 & \hspace{1cm}\left. \left.  + g^A\Big(c\phi \star \widehat{\bP}^{n,m}_0(y^j)\Big) \big(\widehat{A}^j_s-\widetilde{A}^j_s\big)-\frac{1}{m}\sum_{\ell =1}^m h(y^j,y^\ell)\big(\widehat{A}^\ell_s \widehat{A}^j_s - \widetilde{A}^\ell_s \widetilde{A}^j_s \big)\right]\right|ds\\
&\leq \int_0^t\left|C(T)L^A\left( \max_{1\leq j\leq m} \left|\langle \widehat{\bP}^{n,m}_0-\widetilde{\bP}^{n,m}_0, c\phi(.,y^j)\rangle\right|+K_c\max_{1\leq i\leq n}|\widetilde{P}^i_s- \widetilde{P}^i_0|\right)\right.
 + \left|\langle \widehat{\A}^{n,m}_s - \widetilde{\A}^{n,m}_s, \bar{f}\rangle\right|\\
 &\hspace{1cm}\left.  +\frac{C(T)}{m}\sum_{j =1}^m |\langle \widehat{\A}^{n,m}_s- \widetilde{\A}^{n,m}_s, h(y^j,.)\rangle|
  +\frac{C(T)}{m}\sum_{\ell =1}^m |\langle \widetilde{\A}^{n,m}_s- \widehat{\A}^{n,m}_s, fh(.,y^{\ell})\rangle|\right|ds,
\end{align*}
where {$K_c$ corresponds to the bound defined in Assumption~\ref{hyp:cij} and} $\bar{f}(y)=f(y)g^A(c\phi \star \widehat{\bP}^{n,m}_0(y))$ is a Lipschitz continuous and bounded function such that $\Lip(\bar{f})$ is bounded independently from $n,m$, according to the assumptions on $c$ and $\phi$. Using also that $h$ is a Lipschitz function, we finally deduce that there exists $C_1>0$ such that
\begin{align*}
\left|\langle \widehat{\A}^{n,m}_t - \widetilde{\A}^{n,m}_t, f\rangle\right|\leq & \mathcal{W}_1(\widehat{\A}^{n,m}_0,\widetilde{\A}^{n,m}_0)\\
&+ \int_0^t C_1\left( \mathcal{W}_1(\widehat{\bP}^{n,m}_0,\widetilde{\bP}^{n,m}_0)+\max_{1\leq i\leq n}|\widetilde{P}^i_s- \widetilde{P}^i_0|+\mathcal{W}_1(\widehat{\A}^{n,m}_s,\widetilde{\A}^{n,m}_s)\right)ds.
\end{align*}
Taking the supremum in the left hand side w.r.t. $f$, and using Gronwall's lemma, we deduce that for all $t\geq 0$
\begin{equation}\label{eq_gronwall}
\mathcal{W}_1(\widehat{\A}^{n,m}_t,\widetilde{\A}^{n,m}_t)\leq \left(\mathcal{W}_1(\widehat{\A}^{n,m}_0,\widetilde{\A}^{n,m}_0)+C_1t \mathcal{W}_1(\widehat{\bP}^{n,m}_0,\widetilde{\bP}^{n,m}_0)+C_1t\max_{1\leq i\leq n,s\leq t}|\widetilde{P}^i_s- \widetilde{P}^i_0|\right)e^{C_1 t}.
\end{equation}
On the one hand, for the third term in the r.h.s., we have for any $s\in [0,T]$,
\begin{align*}
\big|\widetilde{P}^i_s -\widetilde{P}^i_0 \big| = & \left|\int_0^s \left(g^P\Big(\frac{1}{m}\sum_{j=1}^m G_{ij} m \, c_{ij}^{n,m} \widetilde{A}^j_u\Big) - \frac{1}{n}\sum_{\ell=1}^n k(x^i,x^\ell) \widetilde{P}^\ell_u\right)\, \widetilde{P}^i_u \, du\right|\\
\leq & s\, C(T) \big(|g^P(0)|+ L^P C(T) K_c + \|k\|_\infty C(T)\big)=sC_2,
\end{align*}
where we set $C_2:=C(T) \big(|g^P(0)|+ L^P C(T) K_c + \|k\|_\infty C(T)\big)$, which is a constant independent from $n,m$ and $s$. On the other hand, $\widetilde{\bP}^{n,m}_0$ and $\widetilde{\A}^{n,m}_0$ converge weakly towards $p_0(x)dx$ and $a_0(y)dy$ in probability. Moreover the sequence of r.v. $\mathcal{W}_1(\widehat{\A}^{n,m}_0,a_0(y)dy)$ and $\mathcal{W}_1(\widehat{\bP}^{n,m}_0,p_0(x)dx)$ are bounded in $L^2$, and using the dominated convergence theorem, we deduce the convergences of the expectations to $0$. In addition with the definition of $\widehat{\bP}^{n,m}_0$ and $\widehat{\A}^{n,m}_0$ (see \eqref{def_condinitiale}), we can find $N_1, M_1\geq N_0,M_0$ such that for all $n,m\geq N_1,M_1$,
\begin{equation}\label{ass_condinitiale}
\E[\mathcal{W}_1(\widehat{\A}^{n,m}_0,\widetilde{\A}^{n,m}_0)]\leq \frac{\eps}{12C_0}\quad  \text{ and } \quad\E[\mathcal{W}_1(\widehat{\bP}^{n,m}_0,\widetilde{\bP}^{n,m}_0)]\leq \frac{\eps}{12C_0}.
\end{equation}
In addition with \eqref{eq_gronwall}, we deduce that there exists $\delta_1$ such that for all $t\leq \delta_1$, all $x\in[0,1]$ and all $n,m\geq N_1,M_1$,
\begin{align}
\E\left|c\phi\star (\widehat{\A}^{n,m}_t-\widetilde{\A}^{n,m}_t)(x)\right|&\leq C_0\E\left[\mathcal{W}_1(\widehat{\A}^{n,m}_t,\widetilde{\A}^{n,m}_t)\right]\nonumber\\
&\leq \left(\frac{\eps}{12}+C_1\delta_1 \frac{\eps}{12}+C_0C_1C_2\delta_1^2 \right)e^{C_1 \delta_1}
\leq \frac{\eps}{3}.\label{term3step4}
\end{align}

Using the same computations, we can find $\delta_0\leq \delta_1$ such that for all $\delta\leq \delta_0$ and all $n,m\geq N_1,M_1$, the same result holds for the processes $\widehat{\bP}^{n,m}$  and $\widetilde{\bP}^{n,m}$.\\

\noindent \textbf{Step 4.4: the first term of~\eqref{decomposition} is small.} We will use similar computations as in Step 4.3 to reach this step. To this aim, we define for all $t\geq 0$
$$
\Sigma^{A,n,m}(t
)=\sup\left\{\frac{1}{m}\sum_{j=1}^mG_{ij}mc^{n,m}_{i,j}f(y^j) \Big(\widetilde{A}^j_t -\widehat{A}^j_t\Big)\Big| f:[0,1]\mapsto [0,1]\text{ Lipschitz, } \Lip(f)\leq 1 \right\},
$$
and a similar definition for $\Sigma^{P,n,m}(t)$. Let $f$ be a Lipschitz function, bounded by $1$. Then
\begin{align*}
   & \left| \frac{1}{m}\sum_{j=1}^m  G_{ij}mc^{n,m}_{i,j}f(y^j) \Big(\widetilde{A}^j_t -\widehat{A}^j_t\Big)\right|-\left| \frac{1}{m}\sum_{j=1}^m  G_{ij}mc^{n,m}_{i,j}f(y^j) \Big(\widetilde{A}^j_0 -\widehat{A}^j_0\Big)\right|\\
   &\leq \int_0^t\left|\frac{1}{m}\sum_{j=1}^m  G_{ij}mc^{n,m}_{i,j}f(y^j)g^A\Big(c\phi\star \widetilde{\bP}^{n,m}_0(y^j)\Big)\Big(\widetilde{A}^j_s -\widehat{A}^j_s\Big)\right. \\
   &\hspace{0.5cm}+ \frac{1}{m}\sum_{j=1}^m  G_{ij}mc^{n,m}_{i,j}f(y^j)\widetilde{A}^j_s \left(g^A\Big(\frac{1}{n}\sum_{i=1}^n G_{ij} \, n\, c_{ij}^{n,m} \widetilde{P}^i_s\Big)-g^A\Big(c\phi\star \widetilde{\bP}^{n,m}_0(y^j)\Big)\right)\\
   &\hspace{0.5cm}+ \frac{1}{m}\sum_{\ell=1}^m \widehat{A}^\ell_s \left(\frac{1}{m}\sum_{j=1}^m G_{ij}mc^{n,m}_{i,j} f(y^j)h(y^j,y^\ell)\Big(\widehat{A}^j_s-\widetilde{A}^j_s\Big)\right) \\
   &\hspace{0.5cm}\left. + \frac{1}{m}\sum_{j=1}^m G_{ij}mc^{n,m}_{i,j}f(y^j) \widetilde{A}^j_s \frac{1}{m}\sum_{\ell=1}^m h(y^j,y^\ell)\Big(\widehat{A}^\ell_s-\widetilde{A}^\ell_s\Big) \right|ds.
\end{align*}
Following the same argument as in the previous step, using Gronwall's lemma, we deduce that 
\begin{equation}\label{gronwallbis}
    \Sigma^{A,n,m}(t)\leq \left(\Sigma^{A,n,m}(0)+C_1t \Sigma^{P,n,m}(0)+C_1tC_2\right)e^{C_1 t},
\end{equation}
where $C_1$ and $C_2$ have been defined in the previous step.
\medskip

It remains to prove that $\E[\Sigma^{A,n,m}(0)]$ and $\E[\Sigma^{P,n,m}(0)]$ converge to $0$ with $n$ and $m$. To this aim, we follow the exact same computations as in Step 4.1 to deduce that for any Lipschitz function $f$ with $\|f\|_{\infty}\leq 1$, 
\begin{multline*}
\E\left|\frac{1}{m}\sum_{j=1}^m G_{ij}mc^{n,m}_{i,j}f(y^j) A^j_0 -(c\phi f)\star \A_0^{n,m}(x^i)\right|  \leq \frac{C(T)}{\sqrt{m}}\left(V_{max}^{1/2}+\|mc^{n,m}\|_\infty \|\phi(1-\phi)\|_{\infty}^{1/2}\right)\\
+C(T) \|\phi\|_\infty \|mc^{n,m}-c\|_{\infty},
\end{multline*}
with $A^j_0=\widetilde{A}^j_0$ or $\widehat{A}^j_0$. In addition with the weak convergence of $\widetilde{\A}_0^{n,m}$ and $\widehat{\A}_0^{n,m}$ in probability towards the same limits, and with similar computations for $P$, we deduce that there exists $N_2,M_2\geq N_1,M_1$ such that 
\begin{equation*}
    \E[\Sigma^{A,n,m}(0)]\leq \frac{\eps}{12 C_0}\quad \text{and} \quad  \E[\Sigma^{P,n,m}(0)]\leq \frac{\eps}{12 C_0}.
\end{equation*}
Using also \eqref{gronwallbis}, and remembering the definition of $\delta_0$ at the end of Step 4.2, we deduce that for all $n,m\geq N_2,M_2$ and all $t\leq \delta_0$,
\begin{equation}
    \E\left| \frac{1}{m}\sum_{j=1}^m  G_{ij}mc^{n,m}_{i,j} \Big(\widetilde{A}^j_t -\widehat{A}^j_t\Big) \right| \leq C_0\E\left[ \Sigma^{A,n,m}(t)\right]\leq \frac{\eps}{3}. \label{term1step4}
\end{equation}
Moreover, the same result holds for the processes $\widehat{\bP}^{n,m}$  and $\widetilde{\bP}^{n,m}$.\\

\noindent \textbf{Step 4.5: conclusion for $t\leq \delta_0$.}\\
Combining Equations~\eqref{decomposition}, \eqref{term1step4}, \eqref{term3step4} and the end of Step 4.2, we conclude that the limit in \eqref{eq:but-step4-dur2} is true uniformly for $t\in[0,\delta_0]$. More precisely, for all $n,m\geq N_2, M_2$ and all $t\leq \delta_0$,
$$
\E\left[\left|\frac{1}{m}\sum_{j=1}^m G_{ij}mc^{n,m}_{i,j}\widetilde A^j_t -(c\phi)\star\widetilde\A_t^{n,m}(x^i)\right| \right]\leq \eps. 
$$
Finally, \eqref{eq:but-step4-dur} is valid for all $t\leq \delta_0$ and any limiting value satisfies the equation~\eqref{edp:cinet} for $t\in[0,\delta_0]$. According to the uniqueness of the solution to this equation (see Step 5 below), we deduce in particular that 
\begin{equation}
  \lim_{n,m\rightarrow +\infty} \Big(\mathcal{W}_1\big(\widetilde{\bP}_{\delta_0}^{n,m},\bar{\bP}_{\delta_0}\big)+ \mathcal{W}_1\big(\widetilde{\A}_{\delta_0}^{n,m},\bar{\A}_{\delta_0}\big)\Big)=0,\qquad \mbox{ in probability,}
\end{equation}
with $(\bar{\bP}_t,\bar{\A}_t)_{t\in [0,\delta_0]}$ solution to~\eqref{edp:cinet}. According to Step 3, $\bar{\bP}_t$ and $\bar{\A}_t$ admit densities $\bar{p}_t$ and $\bar{a}_t$ with respect to the Lebesgue measure. \\

\noindent \textbf{Step 4.6: recursion.}
We proceed now by recursion, by defining new auxiliary processes, for $n,m$ given,  $t\geq \delta_0$,
\begin{equation*}
    \widehat{\bP}^{n,m}_t (dx)= \sum_{i=1}^n \widehat{P}^i \delta_{x^i},\qquad \widehat{\A}^{n,m}_t(dx)= \sum_{j=1}^m \widehat{A}^j \delta_{y^j},
\end{equation*} 
with for $i\in \lbrac 1, n\rbrac$ and $j\in \lbrac 1, m\rbrac$, $\widehat{P}^i_{\delta_0}$ and $\widehat{A}^j_{\delta_0}$ are defined in a similar way as the beginning of Step 4.2 using $\bar{p}_{\delta_0}(x)dx$ and $\bar{a}_{\delta_0}(y)dy$ instead of ${p}_{0}(x)dx$ and ${a}_{0}(y)dy$:
\begin{equation}
    \widehat{P}_{\delta_0}^i=n\int_{x^i}^{x^{i+1}}p_{\delta_0}(x)dx \quad \text{and} \quad \widehat{A}_{\delta_0}^j=m\int_{x^j}^{x^{j+1}}a_{\delta_0}(y)dy \quad \text{with }x^{n+1}=y^{m+1}=1.
\end{equation}
And for all $t\geq \delta_0$, we set
\begin{align*}
 &   \frac{d\widehat{P}^i_t}{dt}= \widehat{P}^i_t \left(g^P\big(c\phi \star \widehat{\A}_{\delta_0}(x^i)\big)-\frac{1}{n}\sum_{\ell =1}^n k(x^i,x^\ell)\widehat{P}^\ell_t \right)\\
 &   \frac{d\widehat{A}^j_t}{dt}= \widehat{A}^j_t \left(g^A\big(c\phi \star \widehat{\bP}_{\delta_0}(y^j)\big)-\frac{1}{m}\sum_{\ell =1}^m h(y^j,y^\ell)\widehat{A}^\ell_t\right).
\end{align*}
Notice again that the processes $(\widehat{P}^i_t)$ and $(\widehat{A}^j_t)$ do depend on {$c$, $\phi$ and on the sequences $(x^i)_{i\leq n}$ and $(y^j)_{j\leq m}$,} but not on the $G_{ij}$'s and the $c_{ij}^{n,m}$'s. Thus, we can proceed as in Step 4.2, 4.3, {4.4 and 4.5 and find $N_4, M_4\geq N_2,M_2$ such that for all $n,m\geq N_4, M_4$ and all $t\in [\delta_0,2\delta_0]$}, 
\begin{equation}\label{eqstep4fin}
\E\left[\left|\frac{1}{m}\sum_{j=1}^m G_{ij}mc^{n,m}_{i,j}\widetilde A^j_t -(c\phi)\star\widetilde\A_t^{n,m}(x^i)\right| \right]\leq \eps. 
\end{equation}
In other words, we can conclude similarly as in {Step 4.5} for all $t\in [0,2\delta_0]$. With a direct recursive argument, we finally be able to find $N_{2\kappa}, M_{2\kappa}$, with $\kappa=\lceil T/\delta_0 \rceil$, such that for all $n,m\geq N_{2\kappa}, M_{2\kappa}$ and all $t\in [0,T]$, \eqref{eqstep4fin} holds.

 This finally proves {\eqref{eq:but-step4-dur2} and thus \eqref{eq:but-step4-dur}, which} finishes the proof of Step 4. \\

\noindent \textbf{Step 5: Uniqueness of the solution of \eqref{edp:cinet}.}
It remains to prove the uniqueness of the solution to~\eqref{edp:cinet} to conclude. To this aim, let us take $(\bP^1,\A^1)$ and $(\bP^2,\A^2)$ two deterministic measures, weak solutions to \eqref{edp:cinet} and with identical initial conditions. Let us denote by $\rho$ the Radon metric between two measures $\nu$ and $\mu$ on $\mathcal{M}_{F}([0,1])^2$:
\[
\rho(\nu,\mu) = \sup\left\{ \int_0^1f(x)d(\nu-\mu)(x) \Big| f:[0,1]\mapsto [-1,1] \text{ continuous} \right\}.
\]
With straightforward computations, for any continuous function $f:[0,1]\mapsto[-1,1]$, we can find some finite constants $C_1, C_2$, independent from $t$ and $f$, such that
\begin{align*}
    |\langle \bP^1_t-\bP^2_t,f \rangle| &= \left|\int_0^t\int_0^1f(x)\left[g^P(c\phi\star\A^1_s(x))-k\star\bP^1_s\right]d\bP^1_s(x)ds\right.\\
    &\qquad\left.-\int_0^t\int_0^1f(x)\left[g^P(c\phi\star\A^2_s(x))-k\star\bP^2_s\right]d\bP^2_s(x)ds\right|\\
    &\leq C_1 \int_0^t\rho(\bP^1_s,\bP^2_s)ds+\int_0^t\int_0^1\left(L^P\left|c\phi\star(\A^1_s-\A^2_s(x))\right|+\left|k\star(\bP^1_s-\bP^2_s)(x)\right|\right)d\bP^2_s(x)ds\\
    &\leq C_2 \int_0^t(\rho(\bP^1_s,\bP^2_s)+\rho(\A^1_s,\A^2_s))ds,
\end{align*}
since $x\mapsto f(x)(g^P(c\phi\star\A^1_s(x))-k\star\bP^1_s(x))$ are continuous bounded functions on $[0,1]$ for all $s\in [0,T]$, and $k$ and $c\phi$ are continuous bounded functions on $[0,1]^2$. Same computations can be done with $\A^1$ and $\A^2$, then taking the supremum over all $f$, we find a constant $C_3>0$ such that
\begin{equation*}
    \rho(\bP^1_t,\bP^2_t)+\rho(\A^1_t,\A^2_t)\leq C_2 \int_0^t(\rho(\bP^1_s,\bP^2_s)+\rho(\A^1_s,\A^2_s))ds.
\end{equation*}
We conclude with Gronwall inequality that $\rho(\bP^1_t,\bP^2_t)+\rho(\A^1_t,\A^2_t)=0$ for all $t\in [0,T]$. In other words, $(\bP^1,\A^1)$ and $(\bP^2,\A^2)$ are identical, and the solution to \eqref{edp:cinet} is unique.
This ends the proof.
\end{proof}

{\footnotesize
\providecommand{\noopsort}[1]{}\providecommand{\noopsort}[1]{}

}


\begin{thebibliography}{10}

\bibitem{abbe}
E.~Abbe.
\newblock Community detection and stochastic block models: recent development.
\newblock {\em Journal of Machine Learning Research}, 18(177):1--86, 2018.

\bibitem{akjoujbarbierclenethachemmaidamassolnajimtran}
I.~Akjouj, M.~Barbier, M.~Cl\'enet, W.~Hachem, M.~Maïda, F.~Massol, J.~Najim,
  and V.C. Tran.
\newblock Complex systems in ecology: A guided tour with the lotka-volterra
  model.
\newblock review in progress, 2021.

\bibitem{benarousfyodorovkhoruzhenko}
G.~Ben Arous, Y.V. Fyodorov, and B.A. Khoruzhenko.
\newblock Counting equilibria of large complex systems by instability index.
\newblock {\em Proceedings of the National Academy of Sciences},
  118(34):e2023719118, August 2021.

\bibitem{arroyo2021}
Blanca Arroyo-Correa, Ignasi Bartomeus, and Pedro Jordano.
\newblock Individual-based plant–pollinator networks are structured by
  phenotypic and microsite plant traits.
\newblock {\em Journal of Ecology}, 109(8):2832--2844, 2021.

\bibitem{bansayebilliardchazottes}
V.~Bansaye, S.~Billiard, and J.R. Chazottes.
\newblock Rejuvenating functional responses avec renewal theory.
\newblock {\em Journal of the Royal Society Interface}, 15(146):20180239, 2018.

\bibitem{bansayemeleard}
V.~Bansaye and S.~M\'el\'eard.
\newblock {\em Stochastic models for structured populations. Scaling limits and
  long time behavior}.
\newblock MBI Lecture Series 1.4. Springer, 2015.

\bibitem{barbierarnoldibuninloreau}
M.~Barbier, J.F. Arnoldi, G.~Bunin, and M.~Loreau.
\newblock Generic assembly patterns in complex ecological communities.
\newblock {\em Proceedings of the National Academy of Sciences},
  115(9):2156--2161, 2018.

\bibitem{bascomptejordanomelianolesen}
J.~Bascompte, P.~Jordano, C.~Melian, and J.M. Olesen.
\newblock The nested assembly of plant-animal mutualistic networks.
\newblock {\em Proceedings of the National Academy of Sciences},
  100(16):9383--9387, 2003.

\bibitem{bellomo2015interplay}
Nicola Bellomo, Ahmed Elaiw, Abdullah~M Althiabi, and Mohammed~Ali Alghamdi.
\newblock On the interplay between mathematics and biology: Hallmarks toward a
  new systems biology.
\newblock {\em Physics of life reviews}, 12:44--64, 2015.

\bibitem{berardo2020interactions}
Cecilia Berardo, Stefan Geritz, Mats Gyllenberg, and Ga{\"e}l Raoul.
\newblock Interactions between different predator--prey states: a method for
  the derivation of the functional and numerical response.
\newblock {\em Journal of Mathematical Biology}, 80:2431--2468, 2020.


\bibitem{billingsley99}
P.~Billingsley.
\newblock {\em Convergence of Probability Measures}.
\newblock Wiley Series in Probability and Statistics. John Wiley \& Sons, Inc.,
  New York, second edition edition, 1999.

\bibitem{bollobas2001}
B.~Bollob\'{a}s.
\newblock {\em Random graphs}.
\newblock Cambridge University Press, 2 edition, 2001.

\bibitem{borgschayeslovaszsosvesztergombi}
C.~Borgs, J.~Chayes, L.~Lov\'{a}sz, V.~S\'{o}s, and K.~Vesztergombi.
\newblock Limits of randomly grown graph sequences.
\newblock {\em European Journal of Combinatorics}, 32(7):985--999, 2011.

\bibitem{bunin}
G.~Bunin.
\newblock Ecological communities with {L}otka-{V}olterra dynamics.
\newblock {\em Physical Review E}, 95(4):042414, 2017.

\bibitem{chamberlain2014}
S.A. Chamberlain, R.V. Cartar, A.C. Worley, S.J. Semmler, G.~Gielens,
  S.~Elwell, M.E. Evans, J.C. Vamosi, and E.~Elle.
\newblock Traits and phylogenetic history contribute to network structure
  across canadian plant-pollinator communities.
\newblock {\em Oecologia}, 176:545--556, 2014.

\bibitem{champagnatferrieremeleard}
N.~Champagnat, R.~Ferri\`{e}re, and S.~M\'{e}l\'{e}ard.
\newblock Unifying evolutionary dynamics: from individual stochastic processes
  to macroscopic models via timescale separation.
\newblock {\em Theoretical Population Biology}, 69:297--321, 2006.

\bibitem{cohennewman}
J.E. Cohen and C.M. Newman.
\newblock When will a large complex system be stable?
\newblock {\em Journal of Theoretical Biology}, 113:153--156, 1985.

\bibitem{delmas2019}
Eva Delmas, Mathilde Besson, Marie-Hélène Brice, Laura~A. Burkle, Giulio~V.
  Dalla~Riva, Marie-Josée Fortin, Dominique Gravel, Paulo~R. Guimarães~Jr.,
  David~H. Hembry, Erica~A. Newman, Jens~M. Olesen, Mathias~M. Pires, Justin~D.
  Yeakel, and Timothée Poisot.
\newblock Analysing ecological networks of species interactions.
\newblock {\em Biological Reviews}, 94(1):16--36, 2019.

\bibitem{vanderhofstad}
R.~Van der Hofstad.
\newblock {\em Random Graphs and Complex Networks}, volume~1 of {\em Cambridge
  Series in Statistical and Probabilistic Mathematics}.
\newblock Cambridge University Press, Cambridge, 2017.

\bibitem{desvillettes2008selection}
Laurent Desvillettes, Pierre~Emmanuel Jabin, St{\'e}phane Mischler, and
  Ga{\"e}l Raoul.
\newblock On selection dynamics for continuous structured populations.
\newblock {\em Communications in Mathematical Sciences}, 6(3):729--747, 2008.

\bibitem{durrett}
R.~Durrett.
\newblock {\em Random graph dynamics}.
\newblock Cambridge University Press, New York, 2007.

\bibitem{ethierkurtz}
S.N. Ethier and T.G. Kurtz.
\newblock {\em Markov Processes, Characterization and Convergence}.
\newblock John Wiley \& Sons, New York, 1986.

\bibitem{ferriere2009stochastic}
R. Ferriere and V.C. Tran.
\newblock  Stochastic and deterministic models for age-structured populations with genetically variable traits.
\newblock {\em ESAIM: Proceedings, EDP Sciences}, 27:289--310, 2009.


\bibitem{fontaineguimareskefithebault}
C.~Fontaine, P.R.~Guimaraes Jr., S.~K\'efi, and E.~Th\'ebault.
\newblock The ecological and evolutionary implications of merging different
  types of networks.
\newblock {\em Ecology Letters}, 14(11):1170--1181, 2011.

\bibitem{fourniermeleard}
N.~Fournier and S.~M\'{e}l\'{e}ard.
\newblock A microscopic probabilistic description of a locally regulated
  population and macroscopic approximations.
\newblock {\em Ann. Appl. Probab.}, 14(4):1880--1919, 2004.

\bibitem{fyodorovkhoruzhenko}
Y.V. Fyodorov and B.A. Khoruzhenko.
\newblock Nonlinear analogue of the {M}ay-{W}igner instability transition.
\newblock {\em Proceedings of the National Academy of Sciences},
  113(25):6827--6832, 2016.

\bibitem{gillespie76}
D.T. Gillespie.
\newblock A general method for numerically simulating the stochastic time
  evolution of coupled chemical reactions.
\newblock {\em Journal of Computational Physics}, 22(4):403--434, 1976.



\bibitem{gillespie77}
D.T. Gillespie.
\newblock Exact stochastic simulation of coupled chemical reactions.
\newblock {\em Journal of Physical Chemistry}, 81(25):2340--2361, 1977.

\bibitem{guimaraes2020}
Paulo~R. Guimarães.
\newblock The structure of ecological networks across levels of organization.
\newblock {\em Annual Review of Ecology, Evolution, and Systematics},
  51(1):433--460, 2020.

\bibitem{holland2008}
J.N. Holland and J.L. Bronstein.
\newblock Mutualism.
\newblock In Sven~Erik Jørgensen and Brian~D. Fath, editors, {\em Encyclopedia
  of Ecology}, pages 2485--2491. Academic Press, Oxford, 2008.

\bibitem{holland2008mutualism}
JN~Holland and Judith~L Bronstein.
\newblock Mutualism.
\newblock In {\em Encyclopedia of Ecology, Five-Volume Set}, pages 2485--2491.
  Elsevier Inc., 2008.

\bibitem{holland2010}
J.N. Holland and D.L DeAngelis.
\newblock A consumer–resource approach to the density-dependent population
  dynamics of mutualism.
\newblock {\em Ecology}, 91:1286--1295, 2010.

\bibitem{hollandlaskeyleinhardt}
P.~Holland, K.~Laskey, and S.~Leinhardt.
\newblock Stochastic blockmodels: some first steps.
\newblock {\em Social Networks}, 5:109--137, 1983.

\bibitem{ikedawatanabe}
N.~Ikeda and S.~Watanabe.
\newblock {\em Stochastic Differential Equations and Diffusion Processes},
  volume~24.
\newblock North-Holland Publishing Company, 1989.
\newblock Second Edition.

\bibitem{jabin2011selection}
Pierre-Emmanuel Jabin and Ga{\"e}l Raoul.
\newblock On selection dynamics for competitive interactions.
\newblock {\em Journal of Mathematical Biology}, 63(3):493--517, 2011.

\bibitem{joffemetivier}
A.~Joffe and M.~M\'{e}tivier.
\newblock Weak convergence of sequences of semimartingales with applications to
  multitype branching processes.
\newblock {\em Advances in Applied Probability}, 18:20--65, 1986.

\bibitem{knopzollerrysergerpehorlerfontaine}
E.~Knop, L.~Zoller, R.~Ryser, C.~Gerpe, M.~H\"orler, and C.~Fontaine.
\newblock Artificial light at night as a new threat to pollination.
\newblock {\em Nature}, 548(7666):206--209, 2017.

\bibitem{lever2014sudden}
J.~Jelle Lever, Egbert~H. van Nes, Marten Scheffer, and Jordi Bascompte.
\newblock The sudden collapse of pollinator communities.
\newblock {\em Ecology letters}, 17(3):350--359, 2014.

\bibitem{lovaszbook}
L.~Lov\'asz.
\newblock Large networks and graph limits.
\newblock (60), 2012.

\bibitem{may72}
R.M. May.
\newblock Will a large complex system be stable?
\newblock {\em Nature}, 238:413--414, 1972.

\bibitem{may73}
R.M. May.
\newblock Qualitative stability in model ecosystems.
\newblock {\em Ecology}, 54(3):638--641, 1973.

\bibitem{metztran} J.A.J. Metz and V.C. Tran. 
\newblock Daphnias: from the individual based model to the large population equation. 
\newblock {\em Journal of Mathematical Biology}, 66(4-5):915–933, 2013. Special issue in honor of Odo Diekmann.

\bibitem{pouchol2018global}
Camille Pouchol and Emmanuel Tr{\'e}lat.
\newblock Global stability with selection in integro-differential
  lotka-volterra systems modelling trait-structured populations.
\newblock {\em Journal of Biological Dynamics}, 12(1):872--893, 2018.

\bibitem{roughgarden1978}
Jonathan Roughgarden.
\newblock {\em Theory of Population Genetics and Evolutionary Ecology: An
  Introduction}.
\newblock Macmillan, 1979.

\bibitem{Takeuchi}
Y.~Takeuchi.
\newblock {\em Global dynamical properties of Lotka-Volterra systems}.
\newblock World Scientific, 1996.

\bibitem{tangallesina}
S.~Tang and S.~Allesina.
\newblock Reactivity and stability of large ecosystems.
\newblock {\em Frontiers in Ecology and Evolution}, 2(21), 2014.

\bibitem{thebaultfontaine}
E.~Th\'ebault and C.~Fontaine.
\newblock Stability of ecological communities and the architecture of
  mutualistic and trophic networks.
\newblock {\em Science}, 329(5993):853--856, 2010.

\bibitem{villalobos2019}
S.~Villalobos, J.M. Sevenello-Montagner, and J.C. Vamosi.
\newblock Specialization in plant–pollinator networks: insights from
  local-scale interactions in glenbow ranch provincial park in alberta, canada.
\newblock {\em BMC Ecology}, 19:34, 2019.

\bibitem{villani2009optimal} C.~Villani.
\newblock Optimal transport: old and new.
\newblock {\em Springer}, 338, 2009.

\bibitem{watts2016}
Stella Watts, Carsten~F. Dormann, Ana~M. Martín~González, and Jeff Ollerton.
\newblock {The influence of floral traits on specialization and modularity of
  plant–pollinator networks in a biodiversity hotspot in the Peruvian Andes}.
\newblock {\em Annals of Botany}, 118(3):415--429, 07 2016.

\end{thebibliography}
\end{document}